\newtheorem{theorem}{Theorem}[section]
\newtheorem{lemma}[theorem]{Lemma}
\newtheorem{corollary}[theorem]{Corollary}
\theoremstyle{definition}
\theoremstyle{remark}
\newtheorem{remark}[theorem]{Remark}
\numberwithin{equation}{section}
\begin{document}

\title{The number of cusps of complete Riemannian manifolds with finite volume}

\def\cfac#1{\ifmmode\setbox7\hbox{$\accent"5E#1$}\else\setbox7\hbox{\accent"5E#1}\penalty 10000\relax\fi\raise 1\ht7\hbox{\lower1.0ex\hbox to 1\wd7{\hss\accent"13\hss}}\penalty 10000\hskip-1\wd7\penalty 10000\box7 }

\author[N.T. Dung]{Nguyen Thac Dung}
\address[N.T. Dung]{Department of Mathematics, Mechanics and Informatics,\\ Hanoi University of Sciences, Ha N\^{o}i, Vi\^{e}t Nam}
\email{\href{mailto: N.T. Dung <dungmath@gmail.com>}{dungmath@gmail.com}}

\author[N.N. Khanh]{Nguyen Ngoc Khanh}
\address[N.N. Khanh]{Department of Mathematics, Mechanics and Informatics,\\ Hanoi University of Sciences, Ha N\^{o}i, Vi\^{e}t Nam}
\email{\href{mailto: N.N. Khanh <khanh.mimhus@gmail.com>}{khanh.mimhus@gmail.com}}

\author[T. C. Son]{Ta Cong Son}
\address[T. C. Son]{Department of Mathematics, Mechanics and Informatics,\\ Hanoi University of Sciences, Ha N\^{o}i, Vi\^{e}t Nam}
\email{\href{mailto: T. C. Son <congson82@hus.edu.vn>}{congson82@hus.edu.vn}}

\begin{abstract}In this paper, we will count the number of cusps of
complete Riemannian manifolds $M$ with finite volume. When $M$ is a
complete smooth metric measure spaces, we show that the number of
cusps in bounded by the volume $V$ of $M$ if some geometric
conditions hold true. Moreover, we use the nonlinear theory of the
$p$-Laplacian to give a upper bound of the number of cusps on
complete Riemannian manifolds. The main ingredients in our proof are
a decay estimate of volume of cusps and volume comparison theorems.

\end{abstract}

\subjclass[2010]{Primary 53C23; Secondary 53C24, 58J0} 

\keywords{Cusps; Decay estimate; $p$-Laplacian; Smooth metric measure spaces; Volume comparison theorem}

\maketitle

%\tableofcontents
\section{Introduction}
Let $E$ be an end of a Riemannian manifold $M^n$ and $\lambda_1(M)$
be the first Dirichlet eigenvalue of the Laplacian on $M$. It is well-known
that information of $\lambda_1(M)$ tells us some geometric
properties of the manifold. For example, if $\lambda_1(M)>0$ then
$M$ must have infinite volume, or if $\lambda_1(E)>0$ then either
$E$ has finite volume, namely $E$ is a cusp; or $E$ is
non-parabolic end with volume of exponent growth. In \cite{Cheng}, Cheng considered
complete manifolds $(M^n, g)$ of dimension $n$ with $Ric_M\geq-(n-1)$ and gave an upper
bound of $\lambda_{1}(M)$
$$\lambda_1(M)\leq\frac{(n-1)^2}{4}.$$
Later, Li and Wang showed in \cite{LW02} that if $Ric_M\geq -(n-1)$ and $\lambda_1(M)$ is maximal then
either $M$ has only one end or; $M$ is a topological cylinder with certain warped metric
product. Since $\lambda_1(M)$ is maximal, $M$ must have infinite volume. Hence, Li-Wang's result
says that we can count the number of ends of complete Riemannian manifold $M$ with $Ric_M\geq-(n-1)$
provided $\lambda_1(M)$ obtains its maximal value. In this case,$M$ has at most two ends.

Interestingly,
when $M$ is a complete Riemannian manifold of finite volume, Li and Wang proved in \cite{LW2010} that one can count ends (cusps) via the bottom of Neumann spectrum $\mu_1(M)$ defined by
$$\mu_1(M)=\inf\limits_{\phi\in H^1(M)), \int_M\phi=0}\dfrac{\int_M|\nabla\phi|^2}{\int\phi^2}.$$
Note that $\mu_1(M)$ plays the role of a generalized first non-zero Neumann eigenvalue, although $\mu_1(M)$ might not necessarily be an eigenvalue. However, if $M$ is compact then $\mu_1(M)$  is itself a positive eigenvalue called the first (Neumann) eigenvalue of the Laplacian (see \cite{Veron} or also see \cite{Matei}). As in \cite{Veron, LW2010}, variational principle implies
$$\mu_1(M)\leq\max\{\lambda_1(\Omega_1), \lambda_1(\Omega_2)\}$$
for any two disjoint domains $\Omega_1$ and $\Omega_2$ of $M$, where $\lambda_1(\Omega_1)$ and $\lambda_1(\Omega_2)$ are their first Dirichlet eigenvalues respectively. Li Wang counted cusps on complete
manifolds with finite volume as follows.
\begin{theorem}[\cite{LW2010}]\label{l1}
Let $M^n$ be a complete Riemannian manifold with Ricci curvature bounded from below by
$Ric_M\geq-(n-1)$
Assume that $M$ has finite volume given by $V$, and
$$\mu_1(M)\geq\dfrac{(n-1)^2}{4}.$$
Let us denote $N(M)$ to be the number of ends (cusps) of M. Then there exists a constant $C(n)>0$ depending only on $n$, such that,
$$N(M)\leq C(n)\Big(\frac{V}{V_o(1)}\Big)^2\ln\frac{V}{V_o(1)}$$
where $V_o(1)$ denotes the volume of the unit ball centered at any point $o\in M$.
\end{theorem}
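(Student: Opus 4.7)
The plan is to combine a volume decay estimate, derived from the spectral hypothesis $\mu_1(M)\ge(n-1)^2/4$, with the Bishop--Gromov volume comparison under $\Ric_M\ge-(n-1)$ and a dyadic counting argument. Fix a base point $o\in M$ and a radius $R_0$ large enough that $M\setminus B(o,R_0)$ decomposes into exactly $N$ connected components $E_1,\ldots,E_N$, one per cusp, and set $v_i(r)=\mathrm{vol}(E_i\cap B(o,r))$ and $\bar v_i(r)=\mathrm{vol}(E_i)-v_i(r)$.

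The analytic heart of the argument is the decay estimate for $\bar v_i$. For each pair of cusps $(i,j)$ and each radius $r\ge R_0$, I would let $\psi_k$ be the piecewise-linear cutoff that equals $1$ on $E_k\setminus B(o,r+1)$, vanishes on $(E_k\cap B(o,r))\cup(M\setminus E_k)$, and interpolates linearly in the annulus. Testing the Neumann characterization of $\mu_1(M)$ against $\phi=a_i\psi_i-a_j\psi_j$ with coefficients chosen so that $\int_M\phi=0$, the disjointness of supports splits the Rayleigh quotient and yields
\[
\mu_1(M)\le\max\left\{\frac{v_i(r+1)-v_i(r)}{\bar v_i(r+1)},\,\frac{v_j(r+1)-v_j(r)}{\bar v_j(r+1)}\right\}.
\]
Thus at every scale $r$, at most one cusp can fail $v_i(r+1)-v_i(r)\ge\mu_1\bar v_i(r+1)$; for the remaining cusps, using $v_i(r+1)-v_i(r)=\bar v_i(r)-\bar v_i(r+1)$, this rearranges to the geometric contraction $\bar v_i(r+1)\le(1+\mu_1)^{-1}\bar v_i(r)$. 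Iterating over integer radii, carefully tracking the possibly varying exceptional cusp $b(r)$, and if necessary refining the piecewise-linear cutoff to an exponential profile to sharpen the rate, I expect a uniform estimate of the form $\bar v_i(r)\le C(n)\,\mathrm{vol}(E_i)\,e^{-(n-1)(r-R_0)}$.

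I would combine this with Bishop--Gromov, which under $\Ric\ge-(n-1)$ gives $V_o(r)\le C(n)V_o(1)e^{(n-1)r}$. Setting $R\sim\tfrac{1}{n-1}\ln(V/V_o(1))$ makes the decay estimate force $\bar v_i(R)\le V_o(1)$ for every $i$, so the bulk of each cusp lies inside $B(o,R)$. I would then organize the cusps dyadically by the threshold radius $r_i^\ast$ at which $\bar v_i$ first drops below $V_o(1)$, equivalently by $\log_2(\mathrm{vol}(E_i)/V_o(1))$. Within each dyadic block the Bishop--Gromov ceiling on ambient volume, paired with the decay-enforced lower bound on per-cusp mass, yields a per-block count of order $V/V_o(1)$; summing over the $O(\ln(V/V_o(1)))$ blocks and invoking the global constraint $\sum_i\mathrm{vol}(E_i)\le V$ delivers the claimed bound.

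The hard part is the decay estimate itself. Because the variational principle only controls two cusps at a time, the exceptional cusp $b(r)$ may drift with $r$, and one must either show that each individual cusp is exceptional on only a sparse set of scales, or absorb all exceptions into a single distinguished (say largest) cusp to be discarded from the final count. Reconciling this bookkeeping with the dyadic counting, and sharpening the decay rate enough that the combined estimate picks up exactly the factor $(V/V_o(1))^2\ln(V/V_o(1))$, is the step I expect to require the most care.
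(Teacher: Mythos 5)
Your proposal diverges from the paper's (i.e., Li--Wang's) argument at the decisive step, and the divergence opens a gap you have identified but not closed. The paper applies the variational inequality $\mu_1(M)\le\max\{\lambda_1(\Omega_1),\lambda_1(\Omega_2)\}$ not to pairs of cusps but to the single pair $\Omega_1=B_o(R_0)$, $\Omega_2=M\setminus B_o(R_0)$: Lemma \ref{lm2.2} shows $\lambda_1(B_o(R_0))\le\frac{(n-1)^2}{4}$ once $R_0\approx\frac{2}{(n-1)\delta}\ln\frac{V}{V_o(1)}$, whence the hypothesis $\mu_1(M)\ge\frac{(n-1)^2}{4}$ forces $\lambda_1(M\setminus B_o(R_0))\ge\frac{(n-1)^2}{4}$ on the \emph{entire} complement. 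The decay estimate (Lemma \ref{1} with $u\equiv 1$, i.e.\ Corollary \ref{mrl12}) then applies to every end simultaneously with the sharp rate $e^{-(n-1)R}$ and with no exceptional cusp, and the count follows by packing one unit ball per end into the annulus $B_o(R+2)\setminus B_o(R)$ and comparing its volume (bounded below via Bishop--Gromov by $e^{-(n-1)R}V_o(1)$ per ball) against the decaying annulus volume. The factor $\bigl(\frac{V}{V_o(1)}\bigr)^2\ln\frac{V}{V_o(1)}$ comes from $e^{(n-1)R_0}\cdot\frac{V}{V_o(1)}$ with the above $R_0$ and $\delta=1-1/\ln\frac{V}{V_o(1)}$; there is no dyadic stratification of cusps by mass.

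Your pairwise scheme has two concrete problems. First, the scale-by-scale dichotomy with a piecewise-linear cutoff yields only the contraction $\bar v_i(r+1)\le(1+\mu_1)^{-1}\bar v_i(r)$ on the non-exceptional cusps, and $(1+\mu_1)^{-1}=(1+\frac{(n-1)^2}{4})^{-1}$ is far weaker than the required $e^{-(n-1)}$ for large $n$; the ``exponential profile refinement'' you defer to is precisely the content of Lemma \ref{1}, and that lemma needs a spectral lower bound on a fixed domain containing the whole tail of the end --- it is not compatible with a dichotomy whose exceptional index drifts with $r$. Second, even granting the decay, the exceptional-cusp bookkeeping is not a technicality: the failing index can rotate among all $N$ cusps, and since $\bar v_k$ is merely non-increasing on its exceptional scales you only control $\sum_k m_k\le R-R_0$ for the exception counts $m_k$, which does not give a uniform rate per cusp without further argument. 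A symptom that the accounting has not closed is that your final tally ($O(V/V_o(1))$ per dyadic block times $O(\ln(V/V_o(1)))$ blocks) would produce $\frac{V}{V_o(1)}\ln\frac{V}{V_o(1)}$, a strictly stronger bound than the theorem asserts; the missing power of $\frac{V}{V_o(1)}$ is exactly the $e^{(n-1)R_0}$ that the ball/complement decomposition makes unavoidable. I would recommend replacing the pairwise testing by the ball/complement splitting together with an analogue of Lemma \ref{lm2.2}.
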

Due to Cheng's upper bound of $\lambda_1(M)$ and variational characteristic of $\mu_1(M)$, one can see that in Theorem \ref{l1}, $\mu_1(M)$ is maximal. This means that instead of $\lambda_1(M)$, we can use $\mu_1(M)$ to count the number of cusps of complete Riemannian manifolds with finite volume. We would like to mention that to count cusps of complete manifolds, Li and Wang used a decay volume estimate and a volume comparison theorem.

Motivated by the beauty of Theorem \ref{l1}, in this paper, we want to estimate the number of cusps on smooth metric measure spaces $(M^n, g, e^{-f}dv)$ with finite $f$-volume $V_f$. Recall that a smooth metric measure space $(M^n, g, e^{-f}dv)$ is a complete Riemannian manifold $(M^n, g)$ of dimension $n$ with $f\in\mathcal{C}^\infty(M)$ is a smooth weighted function and $e^{-f}dv$ is the weighted volume. Here $dv$ is the volume form with respect to the metric $g$. On $(M, g, e^{-f}dv$, we consider the weighted Laplacian
$$\Delta_f\cdot=\Delta\cdot+\left\langle\nabla f, \nabla\cdot\right\rangle$$
which is a self-adjoint operator. Associated to the weighted Laplacian, we define the Bakry-\'{E}mery curvature by
$$Ric_f:=Ric_M+Hess f$$
where $Ric_M$ is the Ricci curvature of $M$ and $Hess f$ is the Hessian of $f$. Following the same strategy as in \cite{LW2010}, we first give a decay estimate for weighted volume and use a volume comparison theorem in \cite{MW} to prove the below theorem.
\begin{theorem}\label{main3} Let $(M,g,e^fdv)$ be a complete smooth metric measure space. Assume for some nonnegative constants $\alpha$ and $\beta$, $$ |f|(x)\leq \alpha r(x)+\beta$$
for $x\in M$ with Ricci curvature bounded from below by
$$Ric_f\geq-(n-1)$$
Assume that $M$ has finite volume given by $V_f$, and $$\mu_1(M)\geq\dfrac{(n-1+\alpha)^2}{4}$$
Let us denote $N(M)$ to be the number of ends (cusps) of $M$. Then there exists a contant $C(n)>0$ depending only on $n$, such that,
$$N(M)\leq C(n)\Big(\dfrac{V_f}{V_{o,f}(1)}\Big)^2\ln\Big(\dfrac{V_f}{V_{o,f}(1)}\Big)$$
where $V_{o,f}(1)$ denotes the $f$-volume of the unit ball centered at any fixed point $o\in M$.
\end{theorem}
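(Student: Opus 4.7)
The plan is to adapt the strategy of Li--Wang \cite{LW2010} to the weighted setting. Two ingredients are needed: a sharp exponential decay estimate for the $f$-volume of each cusp, driven by $\mu_1(M)\ge(n-1+\alpha)^2/4$, and the Munteanu--Wang weighted volume comparison \cite{MW}, which under $Ric_f\ge-(n-1)$ and $|f|\le\alpha r+\beta$ gives an upper bound of the form $V_{o,f}(r)\le C(n)\,V_{o,f}(1)\,e^{(n-1+\alpha)(r-1)}$ for $r\ge 1$. Because $V_f<\infty$, every end of $M$ is a cusp, so for $R_0$ sufficiently large the complement $M\setminus B_o(R_0)$ splits into $N(M)$ unbounded components $E_1,\dots,E_{N(M)}$.

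The first step is to establish a weighted decay estimate of the form
\[
V_f\bigl(E_i\setminus B_o(r)\bigr)\le C\,V_f(E_i)\,e^{-(n-1+\alpha)(r-R_0)},\qquad r\ge R_0.
\]
To derive it I would, for each cusp $E=E_i$, test the variational characterisation of $\mu_1(M)$ against a function of the form
\[
\phi(x)=\eta(x)\,e^{-\tfrac{n-1+\alpha}{2}r(x)}\chi_E(x)-c,
\]
where $\eta$ is a cutoff supported well inside $E$ and $c$ is chosen so that $\int_M\phi\,e^{-f}dv=0$. Using $|\nabla r|=1$ one checks that, modulo boundary and cutoff terms, the Rayleigh quotient of $\phi$ equals $(n-1+\alpha)^2/4$; the hypothesis on $\mu_1$ then forces all remaining contributions to be absorbed into the annulus on which $\eta$ transitions. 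Setting $A(r):=V_f(E\setminus B_o(r))$ and sliding the cutoff along $r$ yields a first-order differential inequality of the form $A'(r)+(n-1+\alpha)A(r)\le 0$ in a distributional sense, which integrates to the claimed exponential decay.

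The second step combines the decay estimate with the Munteanu--Wang upper bound in a pigeonhole/optimisation as in \cite[\S 3]{LW2010}. Choose a radius $R>R_0$ so that for every cusp $E_i$ at least half of its $f$-volume lies inside $B_o(R)$; by the decay estimate this forces $R$ to be at least of order $R_0+\frac{1}{n-1+\alpha}\ln\paren{V_f(E_i)/V_{o,f}(1)}$ for the most massive cusps. Disjointness of the cusps outside $B_o(R_0)$ together with Munteanu--Wang then yields
\[
\sum_{i=1}^{N(M)}V_f(E_i)\le 2\,V_{o,f}(R)\le C(n)\,V_{o,f}(1)\,e^{(n-1+\alpha)R},
\]
and balancing the two sides against $V_f$ while optimising the scale $R\sim\ln\paren{V_f/V_{o,f}(1)}$ produces the announced estimate
\[
N(M)\le C(n)\paren{\dfrac{V_f}{V_{o,f}(1)}}^{2}\ln\paren{\dfrac{V_f}{V_{o,f}(1)}},
\]
with the single logarithm coming precisely from this optimal scale.

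The principal obstacle is the first step. The weighted mean-zero normalisation $c$ and the interaction of the cutoff with the sharp exponent $(n-1+\alpha)/2$ are the new features compared with the unweighted argument; in particular, one must verify that the linear growth bound $|f|\le\alpha r+\beta$ enters only through the volume comparison \cite{MW} and never inside the Rayleigh quotient itself, since any appearance of $|f|$ there would destroy sharpness. Once this weighted decay is in place, the remainder is essentially a transcription of \cite{LW2010} with $n-1$ and $dv$ replaced throughout by $n-1+\alpha$ and $e^{-f}dv$.
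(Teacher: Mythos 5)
Your overall architecture (sharp volume decay on each end plus the Munteanu--Wang weighted volume comparison, then an optimization in the scale $R_0\sim\frac{1}{n-1+\alpha}\ln(V_f/V_{o,f}(1))$) is the same as the paper's, and your second step is essentially a correct, if loose, transcription of the counting argument. The genuine gap is in your first step. The differential inequality $A'(r)+(n-1+\alpha)A(r)\le 0$ with $A(r)=V_f(E\setminus B_o(r))$ is, via the coarea formula, a \emph{linear isoperimetric inequality} $\mathrm{Area}_f(\partial B_o(r)\cap E)\ge (n-1+\alpha)\,V_f(E\setminus B_o(r))$, i.e.\ a lower bound on the Cheeger constant of the end by $2a$ with $a=\frac{n-1+\alpha}{2}$. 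A spectral gap $\lambda_{1,f}(E)\ge a^2$ does \emph{not} imply such an isoperimetric bound: Cheeger's inequality runs in the opposite direction, and the Buser-type converse loses the constant, which is fatal here because the whole proof hinges on the decay rate $e^{-(n-1+\alpha)R}$ exactly cancelling the $e^{(n-1+\alpha)R}$ loss in the volume comparison at radius $R$. Moreover, your test function $\eta e^{-ar}\chi_E-c$ has Rayleigh quotient exactly $a^2$ up to error terms, so the hypothesis $\mu_1\ge a^2$ degenerates to $a^2\le a^2$ plus errors; all the content is in those errors, and extracting volume decay from them is precisely the hard part. The paper does this by an Agmon-type iteration (Lemma \ref{1}, in three steps: first $\int_E \rho\,e^{2\delta r_\rho}u^2e^{-f}<\infty$ for every $\delta<1$, then linear growth of $\int_{E_\rho(R)}\rho\,e^{2r_\rho}u^2e^{-f}$, then the annulus decay $V_{E,f}(R+1)-V_{E,f}(R)\le C(1+(R-R_0)^{-1})e^{-2aR}$), which yields the sharp rate only in integrated (annulus) form, not as a pointwise ODE.

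A second, related omission: the decay estimate needs a \emph{Dirichlet} spectral lower bound $\lambda_{1,f}(M\setminus B_o(R_0))\ge\frac{(n-1+\alpha)^2}{4}$ on the complement of a ball, not the Neumann-type quantity $\mu_1(M)$ with its mean-zero constraint, and your proposal never converts one into the other (the constant $c$ cannot simply be absorbed). The paper's mechanism is Lemma \ref{lm2.2}, a curvature-free upper bound $\lambda_{1,f}(B_o(R))\le\frac{1}{4\delta^2(R-1)^2}\bigl(\ln\frac{V_{o,f}(R)}{V_{o,f}(1)}+\ln\frac{81}{1-\delta}\bigr)^2$, combined with the two-domain variational principle $\mu_1(M)\le\max\{\lambda_{1,f}(B_o(R_0)),\lambda_{1,f}(M\setminus B_o(R_0))\}$: choosing $R_0$ as above forces $\lambda_{1,f}(B_o(R_0))\le\frac{(n-1+\alpha)^2}{4}$, hence the Dirichlet bound outside. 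This lemma is also where the logarithm and the choice $1-\delta=1/\ln(V_f/V_{o,f}(1))$ ultimately come from. Finally, in your counting step the inequality $\sum_i V_f(E_i)\le 2V_{o,f}(R)$ is weaker than the trivial $\sum_i V_f(E_i)\le V_f$ and bounds nothing without a per-end volume lower bound; the paper instead places disjoint unit balls at points $y_i\in\partial B(o,R+1)$, one in each unbounded component, bounds each $V_{y_i,f}(1)$ from below by $C^{-1}e^{-(n-1+\alpha)R}V_{o,f}(1)$ via volume comparison, and from above by the decaying annulus volume $V_{o,f}(R+2)-V_{o,f}(R)$, then lets $R\to\infty$. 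You should restructure your first step along the lines of Lemma \ref{1} and Lemma \ref{lm2.2} before the rest of the argument can go through.
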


On the other hand, our second aim in this paper is to count the number of cusps on complete Riemannian manifold $(M^n, g)$ with finite volume via the nonlinear theory of $p$-Laplacian, for $1\leq p<\infty$. On a complete Riemannian manifold, for any $u\in W^{1,p}_{loc}$, the $p$-Laplacian denoted by $\Delta_p$ acting on $u$ as follows
$$\Delta_pu=\operatorname{div}(|\nabla u|^{p-2}\nabla u).$$
If $\lambda$ satisfies
$$\Delta_pu=-\lambda |u|^{p-2}u$$
then $\lambda$ is said to be an eigenvalue of $p$-Laplacian and $u$ is called an eigenfunction with respect to $\lambda$. As in \cite{Matei} and \cite{Veron}, we define
$$\mu_{1, p}(M)=\inf\left\{\frac{\int_M|\nabla\phi|^p}{\int\phi^p}; \phi\in W^{1,p}(M), \phi\not=0\text{ and }\int_M|\phi|^{p-1}\phi=0\right\}.$$
For any couple of nonempty disjoint open subsets $\Omega_1, \Omega_2$ of $M$, Veron proved, in his paper \cite{Veron}, (see also \cite{Matei}) the following result
$$\mu_{1, p}(M)\leq\max\{\lambda_{1,p}(\Omega_1), \lambda_{1,p}(\Omega_2)\}$$
where $\lambda_{1,p}(\Omega_1)$ and $\lambda_{1,p}(\Omega_2)$ are their first Dirichlet $p$-eigenvalues, respectively. As in \cite{Dung, SW} we know that if $M$ satisfies $Ric_M\geq-(n-1)$ then $\lambda_{1, p}$ is bounded from upper by
$$\lambda_{1, p}(M)\leq\big(\frac{n-1}{p}\Big)^p.$$
For further discussion on $p$-Laplacian and its eigenvalues, we refer the reader to \cite{Dung, Matei, SW, Veron} and the references therein. Now, we can count the number of cusps as follows.
\begin{theorem}\label{12main12}
Let $(M^n,g)$ be a complete with finite volume given by $V$. Assume
that the Ricci curvature is bounded from below by
$$Ric_M\geq-(n-1)$$
and
 $$\mu_{1,p}(M)\geq\dfrac{(n-1)^p}{p^p}.$$
 Let us denote
$N(M)$ to be the number of ends (cusps) of $M$. Then  there exists a contant $C(n,p)>0$ depending only
on $n$ and $p$ such that,
$$N(M)\leq C(n,p)\Big(\frac{V}{V_{o}(1)}\Big)^2\ln^{\alpha(p)} \Big(\frac{V}{V_{o}(1)}\Big)$$
where  $\alpha(p)=\max\{1, p/2\}$ and $V_o(1)$ denotes the volume of the unit ball centered at a fixed point $o\in M$. %Here $\mu_{1,p}(M)$ is defined by,
%$$\mu_1(M)=\inf\limits_{\phi\in H^1(M), \int_M\phi=0}\frac{\int_M|\nabla\phi|^p}{\int_M\phi^2}.$$
\end{theorem}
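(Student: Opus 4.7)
The plan is to follow the strategy of Li--Wang for Theorem \ref{l1} (and our Theorem \ref{main3} in the weighted case), replacing the Laplacian with the nonlinear operator $\Delta_p$. The three essential inputs remain: (i) a decay estimate for the volume of a cusp $E$ of a manifold with $Ric_M \geq -(n-1)$, of the form $V(E \cap B_r(o)^c) \leq C e^{-(n-1)r} V(E)$ for $r$ sufficiently large; (ii) the Bishop--Gromov volume comparison under the same Ricci bound, used to compare $V_o(1)$ with volumes at other points and radii; and (iii) Veron's disjoint-domain inequality $\mu_{1,p}(M) \leq \max\{\lambda_{1,p}(\Omega_1), \lambda_{1,p}(\Omega_2)\}$ recalled in the introduction. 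The volume-theoretic inputs (i) and (ii) involve only the Ricci bound and the distance function and hence transfer from the Laplacian setting essentially verbatim. The new work is an upper bound on $\lambda_{1,p}$ on annular regions inside each cusp, tailored to the $p$-Laplace operator.

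For each cusp $E$ I would place a radial test function
\begin{equation*}
\phi(x) = \eta(r(x))\, e^{(n-1)r(x)/p}
\end{equation*}
on an annulus $\Omega = E \cap (B_{r_2}\setminus B_{r_1})$, where $\eta$ is a piecewise-linear cutoff of width $t_0$ at each endpoint; the depth $r_1$, the bulk length $T = r_2 - r_1 - 2t_0$ and the cutoff width $t_0$ will be optimized in terms of $V/V_o(1)$. The exponential profile $e^{(n-1)r/p}$ is chosen precisely so that on the bulk $\{\eta \equiv 1\}$ the pointwise ratio $|\nabla\phi|^p/\phi^p$ equals $((n-1)/p)^p$, matching the assumed lower bound on $\mu_{1,p}$. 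Using the coarea formula with (i) and (ii), I would express $\int_\Omega \phi^p$ and $\int_\Omega |\nabla\phi|^p$ in terms of $T$, $t_0$ and the cusp volume, producing a Rayleigh-quotient upper bound for $\lambda_{1,p}(\Omega)$ of the form $(n-1)^p/p^p + \mathrm{corr}(T,t_0)$.

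The main obstacle is controlling the cross terms in $|\nabla\phi|^p = |\eta' + \tfrac{n-1}{p}\eta|^p e^{(n-1)r}$, since for $p \neq 2$ the $p$-power does not expand linearly. For $1 \leq p \leq 2$, the elementary inequality $|a+b|^p \leq |a|^p + C_p|b|^p + p|a|^{p-1}|b|$ keeps the cross terms lower order and produces only a logarithmic correction to the Rayleigh quotient. For $p > 2$, the full binomial expansion generates intermediate terms $|\eta'|^k |\eta|^{p-k}$ with $1 \leq k \leq p-1$ which, after H\"older and absorption, contribute a total cutoff factor of order $t_0^{-p/2}$ rather than the naive $t_0^{-p}$. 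This asymmetric behavior at $p = 2$ is precisely the origin of the exponent $\alpha(p) = \max\{1, p/2\}$ in the statement.

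To close the counting, I apply Veron's inequality pairwise: at most one cusp $E_i$ can have Rayleigh-quotient bound strictly below $\mu_{1,p}(M)$, since otherwise two cusps would yield $\max\{\lambda_{1,p}(\Omega_i),\lambda_{1,p}(\Omega_j)\} < \mu_{1,p}(M)$, contradicting (iii). For the remaining $N(M)-1$ cusps, the correction $\mathrm{corr}(T,t_0)$ together with the volume decay (i) and the finite-volume constraint $\sum_i V(E_i) \leq V$ translates into a geometric constraint on each cusp. Balancing this against the choice $r_1, T, t_0 \sim \log(V/V_o(1))$ then yields the announced bound
\begin{equation*}
N(M) \leq C(n,p)\,\left(\frac{V}{V_o(1)}\right)^{2} \log^{\alpha(p)}\!\left(\frac{V}{V_o(1)}\right).
\end{equation*}
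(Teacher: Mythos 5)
Your outline assembles the right raw materials (a volume decay estimate, volume comparison, Veron's inequality), but two of the inputs are mischaracterized and the counting mechanism does not close. First, the decay estimate you call input (i) does \emph{not} follow from $Ric_M\geq -(n-1)$: a Ricci lower bound gives volume growth \emph{upper} bounds of order $e^{(n-1)r}$, whereas the exponential \emph{decay} $V_E(R+1)-V_E(R)\leq Ce^{-p\lambda_{1,p}^{1/p}R}$ (Theorem \ref{BKp}, after Buckley--Koskela) is a consequence of a spectral gap $\lambda_{1,p}(E)>0$, and to get the rate $e^{-(n-1)R}$ one needs precisely $\lambda_{1,p}(E)\geq (n-1)^p/p^p$. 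Producing this gap on the ends is the heart of the proof, so it cannot be taken as a curvature-only input. The paper obtains it by proving the upper bound $\lambda_{1,p}(B_o(R))\leq \frac{1}{\delta^p(R-1)^p}\bigl(2+\frac{\alpha(p)}{p}\ln\frac{1}{1-\delta}+\frac1p\ln\frac{V_o(R)}{V_o(1)}\bigr)^p$ of Lemma \ref{4lm2.2}, using a test function $\phi\,e^{-\delta\lambda_{1,p}^{1/p}r}$ with a \emph{decaying} weight supported on the large central ball $B_o(R)$; choosing $R_0\sim\ln(V/V_o(1))$ makes this at most $(n-1)^p/p^p$, and Veron's inequality applied to the pair $B_o(R_0)$, $M\setminus B_o(R_0)$ then forces $\lambda_{1,p}(M\setminus B_o(R_0))\geq (n-1)^p/p^p$. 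This is also where $\alpha(p)=\max\{1,p/2\}$ actually enters: through the convexity splitting $(A+B)^{p/2}\leq (1-\delta^2)^{1-p/2}A^{p/2}+\delta^{2-p}B^{p/2}$ for $p\geq 2$ followed by the choice $\delta=1-1/\ln(V/V_o(1))$, not through a $t_0^{-p/2}$ cutoff factor in a binomial expansion (the leading cutoff contribution to $|\eta'+\tfrac{n-1}{p}\eta|^p$ is of order $t_0^{-p}$, and no H\"older absorption reduces it to $t_0^{-p/2}$).

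Second, the pairwise use of Veron's inequality among cusps yields nothing. With the growing weight $e^{(n-1)r/p}$ on an annulus inside a cusp, the bulk of the Rayleigh quotient equals $((n-1)/p)^p$ exactly and the cutoff contributions to the numerator are nonnegative, so your upper bound $((n-1)/p)^p+\mathrm{corr}(T,t_0)$ never falls strictly below $\mu_{1,p}(M)\geq ((n-1)/p)^p$; hence no two cusps ever produce a contradiction, and the asserted ``geometric constraint on each cusp'' is never derived. What the counting actually requires is a \emph{lower} bound on the volume each end contributes to a fixed shell: pick $y_i\in\partial B(o,R+1)$, one per unbounded component of $M\setminus B(o,R)$, note $V_{y_i}(1)\geq C^{-1}e^{-(n-1)R}V_o(1)$ by volume comparison, and sum the disjoint unit balls against the shell volume $V_o(R+2)-V_o(R)$, which the decay estimate bounds by $Ce^{-(n-1)(R-R_0)}\cdot(\text{const})$; the exponentials in $R$ cancel, $R\to\infty$ gives $N(M)\leq Ce^{(n-1)R_0}V/V_o(1)$ (Theorem \ref{4main1}), and substituting $R_0$ and $\delta$ gives the stated bound. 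Without the central-ball eigenvalue estimate and this packing step, your scheme does not produce a bound on $N(M)$.
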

As noticed in the previous part, two main ingredients in our proof are a decay estimate for volume and a comparison volume theorem. Moreover, it is also worth to note that $\mu_{1, p}(M)$ in Theorem \ref{12main12} is maximal. When $\lambda_{1, p}(M)$ is maximal, it is proved in \cite{Dung, SW} that $M$ has at most two ends. In fact, in \cite{Dung, SW}, the authors pointed out that either $M$ must have only one end or; $M$ is a topological cylinder provided that $Ric_M\geq-(n-1)$ and $\lambda_{1,p}$ is maximal. 

The paper is organized as follows. In the Section \ref{section2}, we introduce a estimate of volume decay rate on smooth metric measure spaces which can be considered as a generalization of volume decay rate in \cite{LW2006, LW2010}. In the Section \ref{section3}, we will count the number of cusps on complete smooth metric measure spaces with finite volume. Finally, we use the nonlinear theory of $p$-Laplacian in Section \ref{section4} to estimate the number of cusps on Riemannian manifolds. Several complete manifolds of finite volume are investigated in this section.
\section{Smooth metric measure spaces with weighted Poincar\'{e} inequality}\label{section2}
Let $(M^n, g, e^{-f}dv)$ be a smooth metric measure space with a weighted positive function $\rho\in\mathcal{C}(M)$. We define the $\rho$-metric by
$$ d{s_\rho}^2=\rho ds^2.$$
Thanks to this metric, we can define the $\rho-$distance function to be
$$ r_\rho(x,y)=\inf_\gamma \ell_\rho(\gamma),$$
where the infimum is taken over all smooth curves $\gamma$ joining $x$ and $y$, and $\ell$ is the length of $\gamma$ with respect to $ds^2_\rho$. For a fixed point $o\in M$, we denote $r_\rho(x)=r_\rho(o,x)$ to be the $\rho-$distance to $o$. As in \cite{LW2006}, we know that $|\nabla r_\rho|^2(x)=\rho(x)$.

Throughout this article, we denote
$$ B_\rho(o,R)=\{x\in M| r_\rho(o, x)<R\}$$
to be the geodesic ball centered at $o\in M$ with radius $R$.
We also denote the geodesic ball
$$ B(o,R)=\{x\in M|r(o,x)<R\}$$
to be the set of points in $M$ that has distance less than $R$ from point $o$ with respect to the background metric $ds^2_M$. For simple the notation, sometime, we will suppress the dependency of $o$ and write $B_\rho(R)=B_\rho(o,R)$ and $B(R)=B(o,R)$. Finally, suppose that $E$ is an end of $M$, we denote $E_\rho(R)=B_\rho(R)\cap E$.
\begin{lemma}\label{1}
Let $(M,ds^2_\rho,e^{-f}dv)$ be a complete smooth metric measure space.
Suppose $E$ is an end of $M$ satisfying there exists a nonnegative function $\rho(x)$ defined on $E$ with the property that
$$ \int_E\rho\phi^2e^{-f}\leq \int_E|\nabla\phi|^2e^{-f}-\int_E\mu\phi^2e^{-f}$$
for any compactly supported function $\phi\in C_c^\infty(E)$ and $\mu$ be a function defined on $E$. Let $u$ be a nonnegative function defined on $E$ such that the differential inequality
$$ \Delta_fu\geq -\mu u$$
holds true. If $u$ has the growth condition
$$ \int_{E_\rho(R)}\rho u^2e^{-2r_\rho}e^{-f}=o(R)$$
as $R\to \infty$, then it must satisfy the decay estimate
$$ \int_{E_{\rho(R+1)}\setminus E_{\rho(R)}}\rho u^2e^{-f}\leq C(1+(R-R_0)^{-1})e^{-2R}\int_{E_\rho(R_0+1)\setminus E_\rho(R_0)}e^{2r_\rho}u^2e^{-f}$$
for some constant $C>0$.
\end{lemma}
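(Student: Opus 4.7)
The plan is to adapt the Agmon-type decay argument of Li-Wang \cite{LW2006,LW2010} to the weighted Bakry-\'Emery setting, using the identity $|\nabla r_\rho|^{2}=\rho$, which makes $r_\rho$ an Agmon-type distance tailored to the weighted Poincar\'e hypothesis and forces the critical exponential weight $e^{r_\rho}$.

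Fix $R_{0}<R$ and let $\eta=\eta(r_\rho)$ be a Lipschitz cutoff supported in $[R_{0},R+1]$, equal to $1$ on $[R_{0}+1,R]$, and linear on the two end annuli. I would test the weighted Poincar\'e hypothesis with $\phi=\eta u e^{r_\rho}$ and expand $|\nabla\phi|^{2}$ via the product rule; because $|\nabla r_\rho|^{2}=\rho$, the contribution $\rho\eta^{2}u^{2}e^{2r_\rho}$ appears on both sides of the inequality and cancels. In parallel I would test the sub-solution inequality $\Delta_{f}u\ge-\mu u$ against the nonnegative function $\eta^{2}ue^{2r_\rho}$; integrating by parts against $e^{-f}dv$ yields an identity that bounds the remaining $\int e^{2r_\rho}\eta^{2}|\nabla u|^{2}e^{-f}$ and the $\langle\nabla u,\cdot\rangle$-cross terms by $\int\mu\eta^{2}u^{2}e^{2r_\rho}e^{-f}$. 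Substituting this identity into the Poincar\'e bound kills every $\mu$-term and every $\langle\nabla u,\cdot\rangle$-cross term pairwise, leaving, after using $\nabla\eta=\eta'\nabla r_\rho$,
\begin{equation*}
0\le\int\rho u^{2}e^{2r_\rho}\bigl((\eta')^{2}+2\eta\eta'\bigr)e^{-f}.
\end{equation*}
Rewriting $(\eta')^{2}+2\eta\eta'=(\eta+\eta')^{2}-\eta^{2}$ and setting $\xi=\eta e^{r_\rho}$ converts this into the scale-invariant Hardy-type bound
\begin{equation*}
\int\rho u^{2}\xi^{2}e^{-f}\le\int\rho u^{2}(\xi')^{2}e^{-f},
\end{equation*}
valid for any Lipschitz $\xi=\xi(r_\rho)$ compactly supported in $E$.

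To extract the claimed exponential decay I would iterate this Hardy-type inequality on nested annuli $[R+k,R+k+1]$, $k=0,1,2,\ldots$. Each iteration produces a three-term recurrence in $k$ relating the $[R+k,R+k+1]$-mass to its two immediate neighbours, with characteristic rates $e^{\pm 2r_\rho}$; the growth hypothesis $\int_{E_\rho(R)}\rho u^{2}e^{-2r_\rho}e^{-f}=o(R)$ is exactly what rules out the exponentially growing solution and selects the decaying branch, producing the factor $e^{-2R}$. To obtain the polynomial correction $1+(R-R_{0})^{-1}$, I would spread an additional interior cutoff linearly across the whole strip $[R_{0},R]$, so that its slope $(R-R_{0})^{-1}$ enters once after averaging over radii in the strip.

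The main technical obstacle is this last passage: at the critical exponent the Poincar\'e/sub-solution combination degenerates into the scale-invariant Hardy inequality above, which on its own carries no intrinsic exponential decay. The entire $e^{-2R}$ factor must therefore be extracted from the recurrence together with the growth hypothesis, and the bookkeeping required so that the nested boundary contributions align into the single annular integral on the right-hand side, and so that the accumulated constants absorb cleanly into $1+(R-R_{0})^{-1}$ rather than into an exponential error, is the delicate part of the proof.
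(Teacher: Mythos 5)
Your opening computation is exactly the paper's: testing the weighted Poincar\'e hypothesis with $\phi=\eta u e^{h}$, pairing it with $\Delta_f u\ge -\mu u$ integrated against $\eta^2 u e^{2h}e^{-f}$, and watching the $\mu$-terms and the $\langle\nabla u,\cdot\rangle$ cross terms cancel is precisely how the paper arrives at its inequality \eqref{22}; your ``Hardy-type'' reformulation at $h=r_\rho$ is a correct algebraic repackaging of that inequality at the critical exponent. The gap is in what you do next. You propose to run the critical-weight recurrence on annuli and let the growth hypothesis ``select the decaying branch.'' But the growth hypothesis controls $\int_{E_\rho(R)}\rho u^{2}e^{-2r_\rho}e^{-f}=o(R)$, i.e.\ the integral with weight $e^{-2r_\rho}$, while your recurrence involves the weight $e^{+2r_\rho}$. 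Translating one into the other on the ball of radius $R+k$ costs a factor $e^{4(R+k)}$, which overwhelms the geometric gain (a factor like $2^{-k}$ or $3^{-k}$) that each iteration of the annular recurrence produces. So the remainder term in your iteration does not tend to zero, and the argument as sketched does not close.

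The missing device is an intermediate, subcritical bootstrap, which is Step 1 of the paper's proof: choose $h$ to grow like $\delta r_\rho$ (with $\delta<1$) up to radius $K/(1+\delta)$ and then to \emph{decay} like $K-r_\rho$ beyond it. On the outer region $e^{2h}\lesssim e^{-2r_\rho}$, so the boundary terms at radius $2R$ are exactly the quantity controlled by the growth hypothesis and vanish as $R\to\infty$; letting $K\to\infty$ then yields the uniform bound $\int_E\rho u^{2}e^{2\delta r_\rho}e^{-f}\le C$ for every $\delta<1$. Only with this subcritical integrability in hand (and $\delta$ chosen so that $2(1-\delta)<\ln 2$) does the paper's critical-weight iteration converge, first giving $\int_{E_\rho(R)}\rho u^{2}e^{2r_\rho}e^{-f}\le C R^{2}$, then $\le CR$, and finally the annular bound $\int_{E_\rho(R+2)\setminus E_\rho(R)}\rho u^{2}e^{2r_\rho}e^{-f}\le C(1+(R-R_0)^{-1})$, which is equivalent to the stated decay. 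You correctly identify the critical-exponent degeneration as the delicate point, but you do not supply the two-stage bootstrap that resolves it, and without it the step from the scale-invariant inequality to the $e^{-2R}$ decay is unjustified.
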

\begin{proof}[Proof of Lemma \ref{1}]To prove the Lemma \ref{1}, we will combine both arguments in \cite{LW2006} and \cite{LW2010}. Let $\phi(r_\rho(x))$ be a nonnegative cut-off function with support in $E$, where $r_\rho(x)$ is the $\rho-$distance to the fixed point $p$. Then for any function $h(r_\rho(x))$, integration by parts implies
\begin{align}
&\int_E|\nabla(\phi e^{2h}u)|^2e^{-f}\notag\\
=&\int_E|\nabla(\phi e^h)|^2u^2e^{-f}+\int_E(\phi e^h)^2|\nabla u|^2e^{-f}+2\int_E(\phi e^h)u\langle \nabla(\phi e^h),\nabla u\rangle e^{-f}\notag\\
=&\int_E|\nabla(\phi e^h)|^2u^2e^{-f}+\int_E\phi^2|\nabla u|^2e^{2h}e^{-f}+\frac{1}{2}\int_E\langle \nabla(\phi^2e^{2h}),\nabla u^2\rangle e^{-f}\notag\\
=&\int_E|\nabla(\phi e^h)|^2u^2e^{-f}+\int_E\phi^2|\nabla u|^2e^{2h}e^{-f}-\frac{1}{2}\int_E\phi^2\Delta_f(u^2)e^{2h}e^{-f}\notag\\
=&\int_E |\nabla(\phi e^h)|^2u^2e^{-f}-\int_E\phi^2u(\Delta_fu)e^{2h}e^{-f}\notag\\
\leq&\int_E|\nabla\phi|^2u^2e^{2h}e^{-f}+2\int_E\phi\langle\nabla\phi,\nabla h\rangle u^2e^{2h}e^{-f}\notag\\
&\int_E\phi^2|\nabla h|^2u^2 e^{2h}e^{-f}+\int_E\phi^2\mu u^2e^{2h}e^{-f}\label{21}
\end{align}
On the other hand, by assumption, we have
$$
\int_E\rho\phi^2e^{-f}\leq \int_E|\nabla\phi|^2e^{-f}-\int_E\mu\phi^2e^{-f},
$$
hence using \eqref{21}, we obtain
\begin{equation}
\label{22}\int_E\rho\phi^2u^2e^{2h}e^{-f}\leq \int_E|\nabla\phi|^2u^2e^{2h}e^{-f}+2\int_E\phi e^{2h}\langle\nabla\phi,\nabla h\rangle u^2e^{-f}+\int_E \phi^2|\nabla h|^2u^2e^{2h}e^{-f}.
\end{equation}
From now, we will devided the proof into three step.

\textbf{Step 1:} We claim that for any $0<\delta<1$, there exists a constant $0<C<\infty$ such that,
$$
\int_E\rho e^{2\delta r_\rho}u^2e^{-f}\leq C.
$$
Indeed, let us now choose
$$
\phi(r_\rho(x))=
\begin{cases}
r_\rho(x)-R_0 & \text{ on }E_\rho(R_0+1)\setminus E_\rho(R_0),\\
1&\text{ on } E_\rho(R)\setminus E_\rho(R_0+1),\\
R^{-1}(2R-r_\rho(x))&\text{ on }E_\rho(2R)\setminus E_\rho(R),\\
0&\text{ on }E\setminus E_\rho(2R).
\end{cases}
$$
It is easy to see that
$$
|\nabla\phi|^2(x)=
\begin{cases}
\rho(x)&\text{ on }E_\rho(R_0+1)\setminus E_\rho(R_0),\\
R^{-2}\rho(x)&\text{ on }E_\rho(2R)\setminus E_\rho(R),\\
0&\text{ on }(E_\rho(R)\setminus E_\rho(R_0+1))\cup (E\setminus E_\rho(2R)).
\end{cases}
$$
Moreover, we also choose
$$
h(r_\rho(x))=
\begin{cases}
\delta r_\rho(x)&\text{ for } r_\rho\leq\frac{K}{1+\delta},\\
K-r_\rho(x)&\text{ for } r_\rho\geq \frac{K}{1+\delta},
\end{cases}
$$
for some fixed $K>(R_0+1)(1+\delta)$. When $R\geq\frac{K}{1+\delta}$, we have
$$
|\nabla h|^2(x)=
\begin{cases}
\delta^2\rho(x)&\text{ for }r_\rho\leq \frac{K}{1+\delta},\\
\rho(x)&\text{ for }r_\rho\geq \frac{K}{1+\delta},
\end{cases}
$$
and
$$
\langle\nabla\phi,\nabla h\rangle(x)=
\begin{cases}
\delta\rho(x)&\text{ on } E_\rho(R_0+1)\setminus E_\rho(R_0),\\
R^{-1}\rho(x)&\text{ on }E_\rho(2R)\setminus E_\rho(R),\\
0&\text{ otherwise. }
\end{cases}
$$
Substituting into \eqref{22}, we infer
\begin{align}
&\int_E\rho\phi^2u^2e^{2h}e^{-f}\notag\\
\leq&\int_{E_\rho(R_0+1)\setminus E_\rho(R_0)}\rho u^2e^{2h}e^{-f}+R^{-2}\int_{E_\rho(2R)\setminus E_\rho(R)}\rho u^2e^{2h}e^{-f}\notag\\
&+2\delta\int_{E_\rho(R_0+1)\setminus E_\rho(R_0)}\rho u^2e^{2h}e^{-f}+2R^{-1}\int_{E_\rho(2R)\setminus E_\rho(R)}\rho u^2e^{2h}e^{-f}\notag\\
&+\delta^2\int_{E_\rho(\frac{K}{1+\delta})\setminus E_\rho(R_0)}\rho\phi^2u^2e^{2h}e^{-f}+\int_{E_{\rho}(2R)\setminus E_\rho(\frac{K}{1+\delta})}\rho\phi^2u^2e^{2h}e^{-f}.\notag
\end{align}
Therefore, by rearrangement the above inequality, we obtain
\begin{align*}
&\int_{E_\rho(\frac{K}{1+\delta})\setminus E_{\rho}(R_0+1)}\rho u^2e^{2h}e^{-f}\\
\leq&\int_{E_\rho(\frac{K}{1+\delta})}\rho\phi^2u^2e^{2h}e^{-f}\\
\leq&\int_{E_\rho(R_0+1)\setminus E_\rho(R_0)}\rho u^2e^{2h}e^{-f}+R^{-2}\int_{E_\rho(2R)\setminus E_\rho(R)}\rho u^2 e^{2h}e^{-f}\\
&+2\delta\int_{E_\rho(R_0+1)\setminus E_\rho(R_0)}\rho u^2e^{2h}e^{-f}+2R^{-1}\int_{E_\rho(2R)\setminus E_\rho(R)}\rho u^2e^{2h}e^{-f}\\&+\delta^2\int_{E_\rho(\frac{K}{1+\delta})\setminus E_\rho(R_0)}\rho u^2e^{2h}e^{-f}.
\end{align*}
Thus
\begin{align*}
&(1-\delta^2)\int_{E_\rho(\frac{K}{1+\delta})\setminus E_\rho(R_0+1)}\rho u^2e^{2h}e^{-f}\\
\leq&(1+\delta)^2\int_{E_\rho(R_0+1)\setminus E_\rho(R_0)}\rho u^2e^{2h}e^{-f}+R^{-2}\int_{E_\rho(2R)\setminus E_\rho(R)}\rho u^2e^{2h}e^{-f}\\&+2R^{-1}\int_{E_\rho(2R)\setminus E_\rho(R)}\rho u^2e^{2h}e^{-f}.
\end{align*}
Due to the definition of $h$ and the assumption on the growth condition of $u$, we see that the last two terms on the right hand side tend to as $R\to \infty$. Hence, we have the estimate
\begin{align*}
\int_{E_\rho(\frac{K}{1+\delta})\setminus E_\rho(R_0+1)}\rho u^2e^{2\delta r_\rho}e^{-f}\leq\frac{(1+\delta)^2}{1-\delta^2}\int_{E_\rho(R_0+1)\setminus E_\rho(R_0)}\rho u^2e^{2\delta r_\rho}e^{-f}.
\end{align*}
Note that the right hand side does not depend on $K$, by letting $K\to \infty$ we conclude that
\begin{equation}
\label{23}\int_{E\setminus E_\rho(R_0+1)}\rho u^2e^{2\delta r_\rho}e^{-f}\leq C,
\end{equation}
where $$
C=\frac{(1+\delta a)^2}{1-\delta^2}\int_{E(R_0+1)\setminus E(R_0)} u^2e^{2\delta ar}e^{-f}.
$$
\textbf{Step 2: }We want to prove that
\begin{equation}
\label{26}
\int_{E_\rho(R)}\rho u^2e^{2r_\rho}e^{-f}\leq C_6R.
\end{equation}
To do this, our first aim is to improve \eqref{23} by setting $h=r_\rho$ in the previous arguments. Now, suppose that $h=r_\rho$, by \eqref{22}, we infer
$$
-2\int_E\phi e^{2r_\rho}\langle\nabla\phi,\nabla r_\rho\rangle u^2e^{-f}\leq \int_E|\nabla \phi|^2u^2e^{2r_\rho}e^{-f}.
$$
For $R_0<R_1<R$, we choose
$$
\phi(x)=
\begin{cases}
\frac{r_\rho(x)-R_0}{R_1-R_0}&\text{ on }E_\rho(R_1)\setminus E_\rho(R_0),\\
\frac{R-r_\rho(x)}{R-R_1}&\text{ on }E_\rho(R)\setminus E_\rho(R_1).
\end{cases}
$$
Plugging $\phi$ in the above inequality, we obtain
\begin{align*}
&\frac{2}{R-R_1}\int_{E_\rho(R)\setminus E_\rho(R_1)}\left(\frac{R-r_\rho(x)}{R-R_1}\right)\rho u^2e^{2r_\rho}e^{-f}\\
\leq &\frac{1}{(R_1-R_0)^2}\int_{E_\rho(R_1)\setminus E_\rho(R_0)}\rho u^2e^{2 r_\rho}e^{-f}+\frac{1}{(R-R_1)^2}\int_{E_\rho(R)\setminus E_\rho(R_1)}\rho u^2e^{2r_\rho}e^{-f}\\&+\frac{2}{(R_1-R_0)^2}\int_{E_\rho(R_1)\setminus E_\rho(R_0)}(r_\rho-R_0)\rho u^2e^{2r_\rho}e^{-f}.
\end{align*}
Observe that for any $0<t<R-R_1,$ the following inequality
$$
\frac{2t}{(R-R_1)^2}\int_{E_\rho(R-t)\setminus E_\rho(R_1)}\rho u^2e^{2r_\rho}e^{-f}\leq \frac{2}{(R-R_1)^2}\int_{E_\rho(R)\setminus E_\rho(R_1)}(R-r_\rho)\rho u^2e^{2r_\rho}e^{-f},
$$
holds true. Therefore, we conclude that
\begin{align}
&\frac{2t}{(R-R_1)^2}\int_{E_\rho(R-t)\setminus E_\rho(R_1)}\rho u^2e^{2r_\rho}e^{-f}\notag\\
\leq&\left (\frac{2}{R_1-R_0}+\frac{1}{(R_1-R_0)^2}\right )\int_{E_\rho(R_1)\setminus E_\rho(R_0)}\rho u^2e^{2r_\rho}e^{-f}\notag\\&+\frac{1}{(R-R_1)^2}\int_{E_\rho(R)\setminus E_\rho(R_1)}\rho u^2e^{2r_\rho}e^{-f}.\label{24}
\end{align}
Now, by taking $R_1=R_0+1, t=1,$ and setting
$$
g(R)=\int_{E_\rho(R)\setminus E_\rho(R_0+1)}\rho u^2e^{2r_\rho}e^{-f},
$$
the inequality \eqref{24} becomes
$$
g(R-1)\leq C_1R^2+\frac{1}{2}g(R),
$$
where
$$
C_1=\frac{3}{2}\int_{E_\rho(R_0+1)\setminus E_\rho(R_0)}\rho u^2e^{2r_\rho}e^{-f}
$$
is independent of $R$. Iterating this inequality, we obtain that for any positive integer $k$ and $R\geq 1$
\begin{align*}
g(R)\leq&C_1\sum_{i=1}^k\frac{(R+i)^2}{2^{i-1}}+2^{-k}g(R+k)\\
\leq &C_1R^2\sum_{i=1}^{\infty}\frac{(1+i)^2}{2^{i-1}}+2^{-k}g(R+k)\\
\leq&C_2R^2+2^{-k}g(R+k)
\end{align*}
for some constant $C_2$. Note that in our previous estimate \eqref{23}, we have proved the following inequality
$$
\int_E\rho u^2e^{2\delta r_\rho}e^{-f}\leq C
$$
for any $\delta<1$. Thus, this implies that
\begin{align*}
g(R+k)=&\int_{E_\rho(R+k)\setminus E_\rho(R_0+1)}\rho u^2e^{2r_\rho}e^{-f}\\
\leq&e^{2(R+k)(1-\delta)}\int_{E_\rho(R+k)\setminus  E_\rho(R_0+1)}\rho u^2e^{2\delta r_\rho}e^{-f}\\
\leq&Ce^{2(R+k)(1-\delta)}.
\end{align*}
Now, if we choose $2(1-\delta)<\ln 2$ then 
$$
2^{-k}g(R+k)\to 0
$$
as $k\to\infty$. Consequently, we obtain
$$
g(R)\leq C_2R^2.
$$
Therefore, by adjusting the constant, we conclude
\begin{equation}
\label{25}\int_{E_\rho(R)}\rho u^2e^{2r_\rho}e^{-f}\leq C_3R^2
\end{equation}
for all $R\geq R_0$.

Finally, using inequality \eqref{24} again and by choosing $R_1=R_0+1$ and $t=R/2$ this times, we infer
$$
R\int_{E_\rho(\frac{R}{2})\setminus E_\rho(R_0+1)}\rho u^2e^{2r_\rho}e^{-f}\leq C_4 R^2+\int_{E_\rho(R)\setminus E_\rho(R_0+1)}\rho u^2e^{2r_\rho}e^{-f}.
$$
Observe that the second term on the right hand side is bounded by \eqref{25}, we have
$$
\int_{E_{\rho}(\frac{R}{2})\setminus E_\rho(R_0+1)}\rho u^2e^{2r_\rho}e^{-f}\leq C_5 R.
$$
Therefore, for $R\geq R_0$, the claim is prove.

\textbf{Step 3: }In this step, we will complete the proof of Lemma \ref{1} by using \eqref{26}
$$
\int_{E_{\rho}(R)\setminus E_\rho(R_0+1)}\rho u^2e^{2r_\rho}e^{-f}\leq C_6R.
$$
Indeed, letting $t=2$ and $R_1=R-4$ in \eqref{24}, we obtain
\begin{align*}
&\int_{E_\rho(R-2)\setminus E_\rho(R-4)}\rho u^2e^{2r_\rho}e^{-f}\\
\leq &\left (\frac{8}{R-R_0-4}+\frac{4}{(R-R_0-4)^2}\right )\int_{E_\rho(R-4)\setminus E_\rho(R_0)}\rho u^2e^{2r_\rho} e^{-f}\\&+\frac{1}{4}\int_{E_\rho(R)\setminus E_\rho(R-4)}\rho u^2e^{2r_\rho}e^{-f}.
\end{align*}
Thanks to \eqref{26}, the first term of the right hand side is estimated by
$$
C(1+(R-R_0-4)^{-1})
$$
for $R-4\geq 2(R_0+1)$. Hence, by renaming $R$, the above inequality can be rewritten as
$$
\int_{E_\rho(R-2)\setminus E_\rho(R-4)}\rho u^2e^{2r_\rho}e^{-f}\leq C(1+(R-R_0)^{-1})+\frac{1}{3}\int_{E_\rho(R)\setminus E_\rho(R-2)}\rho u^2e^{2r_\rho}e^{-f}.
$$
Iterating this inequality $k$ times, we conclude that
$$
\int_{E_\rho(R+2)\setminus E_\rho(R)}\rho u^2e^{2r_\rho}e^{-f}\leq C(1+(R-R_0)^{-1})\sum_{i=0}^{k-1}3^{-i}+3^{-k}\int_{E_\rho(R+2(k+1))\setminus E_\rho(R+2k)}\rho u^2e^{2r_\rho}e^{-f}.
$$
However, using \eqref{26} again, we deduce that the second term is bounded by
$$
3^{-k}\int_{E_\rho(R+2(k+1))\setminus E_\rho(R+2k)}\rho u^2e^{2r_\rho}e^{-f}\leq C_63^{-k}(R+2(k+1))
$$
which tends to $0$ as $k\to\infty$. This implies
$$
\int_{E_\rho(R+2)\setminus E_\rho(R)}\rho u^2e^{2r_\rho}e^{-f}\leq C(1+(R-R_0)^{-1})
$$
for some constant $C>0$ independent of $R$, and the lemma follows.
\end{proof}
\begin{remark}Recently, in \cite{MSW}, Munteanu et. all  introduced a parabolic version of decay estimate for weighted volume and used it to investigate Poisson equation on complete smooth metric measure spaces.
\end{remark}
\begin{corollary}\label{coro1}
Let $E$ be an end of a complete smooth metric measure space $(M,ds^2_\rho,e^{-f}dv)$. Suppose that $\lambda_{1, f}(E)>0$, i. e.,
$$
\lambda_{1, f}\int_E\varphi^2 e^{-f}\leq\int_E|\nabla\varphi|^2e^{-f}
$$
for any compactly supported function $\varphi \in \mathcal{C}_0^\infty(E)$. Let $u$ be a nonnegative function defined on $E$ such that
$$
(\Delta_f+\mu)u\geq 0
$$
for some constant $\mu$ satisfying $\lambda_{1, f}-\mu>0$. If $u$ has the growth condition
$$
\int_{E(R)} u^2e^{-2ar}e^{-f}=o(R)
$$
when $R\to\infty$, where $a=\sqrt[]{\lambda_{1, f}-\mu}$, then $u$ must satisfy the decay estimate
$$
\int_{E(R+1)\setminus E(R)} u^2e^{-f}\leq C(1+(R-R_0)^{-1})e^{-2aR}\int_{E(R_0+1)\setminus E(R_0)}e^{2ar}u^2e^{-f}
$$
for some constant $C>0$ depending on $f, $ and $a$.
\end{corollary}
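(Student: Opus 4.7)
The plan is to obtain Corollary \ref{coro1} as a direct specialization of Lemma \ref{1}, with the weight function $\rho$ chosen to be the constant $\rho \equiv a^2 = \lambda_{1,f} - \mu$ on $E$, after which the lemma's decay estimate in the $\rho$-metric is rescaled back to the original Riemannian metric.

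With this choice, the $\rho$-metric is $ds_\rho^2 = a^2\, ds^2$, so $r_\rho(x) = a\, r(x)$ and $|\nabla r_\rho|^2 = \rho$ holds trivially. The weighted Poincar\'e-type inequality required by Lemma \ref{1}, namely
\begin{equation*}
\int_E \rho \phi^2 e^{-f} \leq \int_E |\nabla \phi|^2 e^{-f} - \int_E \mu \phi^2 e^{-f},
\end{equation*}
reduces to $\lambda_{1,f} \int_E \phi^2 e^{-f} \leq \int_E |\nabla \phi|^2 e^{-f}$, which is precisely the assumption $\lambda_{1,f}(E) > 0$. The differential inequality $\Delta_f u \geq -\mu u$ required by the lemma is merely a rewrite of the corollary's hypothesis $(\Delta_f + \mu) u \geq 0$. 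Since $E_\rho(R) = E(R/a)$, the $\rho$-growth hypothesis
\begin{equation*}
\int_{E_\rho(R)} \rho u^2 e^{-2r_\rho} e^{-f} = a^2 \int_{E(R/a)} u^2 e^{-2ar} e^{-f} = o(R) \quad \text{as } R \to \infty
\end{equation*}
follows directly from the growth hypothesis of the corollary.

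Invoking Lemma \ref{1} and rescaling $R \mapsto a R$, $R_0 \mapsto a R_0$ then yields
\begin{equation*}
\int_{E(R + 1/a) \setminus E(R)} u^2 e^{-f} \leq C\,(1 + (R - R_0)^{-1})\, e^{-2aR} \int_{E(R_0 + 1/a) \setminus E(R_0)} e^{2ar} u^2 e^{-f}
\end{equation*}
over annuli of width $1/a$ in the original metric. A short covering argument then passes to the unit-width annuli as stated: when $a \geq 1$, I decompose $E(R+1) \setminus E(R)$ into at most $\lceil a \rceil$ pieces of width $1/a$ and sum (the right-hand annulus being smaller than $E(R_0+1) \setminus E(R_0)$, the bound is immediate); when $a < 1$, the left-hand unit annulus is already contained in a single width-$1/a$ annulus, while $E(R_0 + 1/a) \setminus E(R_0)$ is partitioned into finitely many pieces of width $1$ whose exponentially weighted integrals are summed using the factor $e^{2ar}$. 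All numerical factors are absorbed into $C = C(f, a)$.

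The essential insight is that the constant weight $\rho = a^2$ converts the unweighted Poincar\'e inequality in the corollary into the $\rho$-weighted inequality that Lemma \ref{1} requires. The only mildly delicate step is the conversion between $\rho$-metric and original-metric unit annuli, handled by the covering argument above; I do not foresee any substantive obstacle beyond this bookkeeping.
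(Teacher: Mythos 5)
Your proposal is exactly the paper's proof: the paper likewise sets $\rho=a^{2}=\lambda_{1,f}-\mu$ (constant), observes that $r_{\rho}=ar$ so the weighted Poincar\'e hypothesis of Lemma \ref{1} reduces to $\lambda_{1,f}(E)>0$, and then simply invokes Lemma \ref{1}. You are in fact more careful than the paper, which silently ignores the conversion from width-$1/a$ annuli to unit annuli; your covering argument handles this correctly for $a\geq 1$, and the remaining imprecision for $a<1$ (one cannot literally bound the integral over the wider reference annulus $E(R_{0}+1/a)\setminus E(R_{0})$ by the one over $E(R_{0}+1)\setminus E(R_{0})$ by partitioning) only affects the cosmetic form of the reference term and is glossed over by the paper as well.
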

\begin{proof}[Proof of Corollary \ref{coro1}]
By variational principle for $\lambda_{1, f}$, we have
$$
(\lambda_{1, f}-\mu)\int_E\varphi^2e^{-f}\leq\int_E|\varphi|^2e^{-f}-\int_E\mu\varphi^2e^{-f}.
$$
Let $\rho=a^2$, the distance function with respect to the complete metric $\rho ds^2$ is given by
$$
r_\rho(x)=ar(x).
$$
Now, we can apply Lemma \ref{1} to complete the proof.
\end{proof}
Note that if $u\equiv 1$, we obtain the following decay estimate
\begin{corollary}\label{mrl12}
Suppose that $E$ is a $f$-parabolic end of smooth metric measure space $(M, g, e^{-f}dv)$ with $\lambda_{1, f}(E)>0$. Denote by $V_{E, f}(R)$ the weighted volume of $E(R)$, then the following decay estimate
$$\begin{aligned}
V_{E, f}(R+1)&-V_{E, f}(R)\\
&\leq C(\lambda_{1, f}(E))\left(1+\frac{1}{R-R_0}\right)e^{2\ \sqrt[]{\lambda_{1, f}}(R-R_0)}(V_{E, f}(R_0+1)-V_{E, f}(R_0))
\end{aligned}$$
holds true. Here $R_0$ is a given positive number and $C(\lambda_{1, f}(E))$ is some positive constant depending on $\lambda_{1, f}(E)$.
\end{corollary}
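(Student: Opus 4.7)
The plan is to obtain Corollary \ref{mrl12} as a direct specialization of Corollary \ref{coro1}, with the choices $u \equiv 1$ and $\mu = 0$. With these choices the differential inequality $(\Delta_f + \mu) u \geq 0$ degenerates to $\Delta_f(1) = 0 \geq 0$, which is trivial, and the constant $a = \sqrt{\lambda_{1,f} - \mu}$ appearing in Corollary \ref{coro1} specializes to $a = \sqrt{\lambda_{1,f}(E)}$, which is positive by hypothesis.

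Before invoking the corollary I would verify its growth hypothesis, namely
\[
\int_{E(R)} e^{-2ar} e^{-f} = o(R) \quad \text{as } R \to \infty.
\]
This is where the $f$-parabolicity assumption enters: combined with $\lambda_{1,f}(E) > 0$, it forces $E$ to have finite weighted volume, in analogy with the classical dichotomy for ends recalled in the introduction (parabolic end with positive bottom of spectrum must be a cusp). Once $V_{E,f}(E) < \infty$, the integrand is dominated by $e^{-f}$, hence the integral is uniformly bounded in $R$ and is certainly $o(R)$.

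With the hypothesis in place, Corollary \ref{coro1} applied to $u \equiv 1$ gives
\[
\int_{E(R+1)\setminus E(R)} e^{-f} \leq C\left(1 + \frac{1}{R-R_0}\right) e^{-2aR} \int_{E(R_0+1)\setminus E(R_0)} e^{2ar} e^{-f}.
\]
The left-hand side is, by definition, $V_{E,f}(R+1) - V_{E,f}(R)$. On the shell $E(R_0+1) \setminus E(R_0)$ we have $r \leq R_0 + 1$, so the remaining integral is dominated by $e^{2a(R_0+1)}\bigl(V_{E,f}(R_0+1) - V_{E,f}(R_0)\bigr)$. Absorbing the harmless factor $e^{2a(R_0+1)}$ into a new constant $C(\lambda_{1,f}(E))$ and consolidating the exponential factor in terms of $R - R_0$ yields the stated estimate.

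The only step that is not pure bookkeeping is the verification of the growth hypothesis; once $f$-parabolicity together with $\lambda_{1,f}(E) > 0$ is used to conclude that $E$ has finite $f$-volume, the rest of the argument is an immediate specialization of Corollary \ref{coro1}. Hence I expect no essential obstacle beyond recording the weighted analogue of the parabolic-versus-non-parabolic dichotomy.
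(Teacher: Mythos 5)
Your proposal matches the paper's own route exactly: the paper derives Corollary \ref{mrl12} simply by setting $u\equiv 1$ (and $\mu=0$) in Corollary \ref{coro1}, and your verification of the growth hypothesis via finiteness of the weighted volume of a parabolic end with $\lambda_{1,f}(E)>0$ is the natural way to fill in the one step the paper leaves implicit. Note only that your (correct) computation yields a \emph{decaying} factor $e^{-2\sqrt{\lambda_{1,f}}(R-R_0)}$, which is what is actually used later in the proof of Theorem \ref{main2}; the positive exponent printed in the statement of Corollary \ref{mrl12} appears to be a sign typo.
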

It is worth to mention that in \cite{BK}, Buckley and Koskela also proved earlier a version of decay estimate for $f$-volume in a more general setting.
%\begin{corollary}\label{241-r1}
%If $y\in\partial B(p,R+1)$ then
%$$V_y(1)\geq C_1^{-1} e^{-2R}V_p(1).
%$$
%\begin{proof}[Proof of Corollary \ref{241-r1}] By $y\in\partial B(p,R+1)$ then  $B_p(1)\subseteq B_y(R+2)$. According to theorem  \ref{theorem23},
%We have $$V_p(1)\leq V_y(R+2)\leq Ce^{2(R+2)}V_y(1)=C_1e^{2R}V_y(1)$$
%Then $
%V_y(1)\geq C_1^{-1} e^{-2R}V_p(1)
%$.
%\end{proof}
%\end{corollary}
%\begin{proof}[Proof of Theorem \ref{theorem1}]
%According to Corollary \ref{coro1}, if we denote $V_p(R)$ to be the volume of the geodesic ball $B(p,R)$, then for all $R>2(R_0+1)$ we have
%\begin{equation}\label{31}
%V_p(R+2)-V_p(R)\leq C(1+(R-R_0)^{-1})e^{2(R_0-R)}(V_p(R_0+1)-V_p(R_0)).
%\end{equation}
%On the other hand, since Corollary \ref{241-r1} if $y\in\partial B(p,R+1)$ then
%\begin{equation}\label{32}
%V_y(1)\geq C_1^{-1} e^{-2R}V_p(1).
%\end{equation}
%Obviously, if $N(R)$ denotes the number of ends with respect to $B_p(R)$, namely $M\setminus B(p,R)$ has $N(R)$ unbounded components, then there exist $N(R)$ number of points $\{y_i\in \partial B(p,R)\}$ such that $B({y_i},1)\cap B({y_j},1)=\oslash$ for $i\neq j$. In particular applying \eqref{32} to each of the $y_i$ and combining with \eqref{31}, we conclude that
%\begin{align*}
%N(R)C_1^{-1}e^{-2R}V_p(1)&\leq \sum_{i=1}^{N(R)}V_{y_i}(1)\\
%&\leq V_p(R+2)-V_p(R)\\
%&\leq C(1+(R-R_0)^{-1})e^{2(R_0-R)}(V_p(R_0+1)-V_p(R_0).
%\end{align*}
%This implies that
%$$
%N(R)\leq C C_1(1+(R-R_0)^{-1})e^{2R_0}(V_p(R_0+1)-V_p(R_0))V_p^{-1}(1).
%$$
%Letting $R\to \infty$, we obtain the result.
%\end{proof}
\section{Counting cusps on smooth metric measure spaces of finite $f$-volume}\label{section3}
As what we mentioned in the introduction part, in order to estimate the number of cusps we must have a decay estimate of the $f$-volume proved in Section \ref{section2} and a volume comparison theorem. Hence, first we introduce a volume comparison result given by Munteanu and Wang in \cite{MW}.
\begin{lemma}\label{vc}
Let $(M,g,e^{-f}dv)$ be a complete smooth metric measure space with $Ric_f\geq -(n-1)$. Suppose the weighted function has sublinear growth, i. e. for $x\in M$,
$$ |f|(x)\leq \alpha r(x)+\beta,$$
for some nonnegative constants $\alpha$ and $\beta$. Then there exists a constant $C>0$ such that the volume upper bound
$$ V_f(B(o,R))\leq Ce^{(n-1+\alpha)R}V_f(B(o,1))$$
holds for all $R>0$. Here $V_f(B(o, R))$ stands for the weighted volume of $B(o, R)$
\end{lemma}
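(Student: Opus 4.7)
The plan is to adapt the classical Bishop--Gromov comparison argument to the weighted setting. First I would write the $f$-volume in polar geodesic coordinates around $o$: within the cut locus,
$$V_f(B(o,R))=\int_0^R\int_{S^{n-1}_o}e^{-f(\exp_o(r\theta))}J(r,\theta)\,d\theta\,dr,$$
where $J(r,\theta)$ is the Jacobian of the exponential map. Introducing the weighted Jacobian $\mathcal{J}_f(r,\theta):=e^{-f}J(r,\theta)$, one checks that $\partial_r\log\mathcal{J}_f=\Delta_f r$, so the problem is reduced to controlling the weighted mean curvature $\Delta_f r$ along radial geodesics and, eventually, to controlling $\int_{S^{n-1}_o}\mathcal{J}_f(r,\theta)\,d\theta$ as a function of $r$.

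Next I would apply the weighted Bochner formula to $r$. Since $|\nabla r|=1$, this yields
$$0=|\Hess r|^2+\partial_r\Delta_f r+\Ric_f(\partial_r,\partial_r),$$
and combining this with the hypothesis $\Ric_f\geq-(n-1)$ and the Cauchy--Schwarz estimate $|\Hess r|^2\geq(\Delta r)^2/(n-1)$ gives a Riccati-type inequality that relates $\Delta r$ to the radial derivative of $f$. Integrating along each minimizing geodesic from $o$ and comparing with the classical $\coth$-barrier produces
$$J(r,\theta)\leq C(\sinh r)^{n-1}\exp\!\left(\int_0^r\partial_r f\,dt\right).$$
Multiplying by $e^{-f(r,\theta)}$ kills the $\int_0^r\partial_r f\,dt$ factor up to $f(o)$ and the boundary value, and the growth hypothesis $|f|(x)\leq\alpha r(x)+\beta$ then absorbs the remaining $f(r,\theta)-f(o)$ into a factor of $e^{\alpha r+2\beta}$. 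This leads to the pointwise bound $\mathcal{J}_f(r,\theta)\leq C\,e^{(n-1+\alpha)r}\mathcal{J}_f(1,\theta)$ for $r\geq 1$. Integrating over $\theta$ gives the analogous estimate for the weighted area $A_f(r)$, and a further integration in $r$ yields $V_f(B(o,R))\leq C e^{(n-1+\alpha)R}V_f(B(o,1))$, with contributions from the cut locus handled in the usual distributional/Calabi sense.

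The main obstacle is that the hypothesis $|f|\leq\alpha r+\beta$ does \emph{not} bound $\partial_r f$ or $\Hess f$ pointwise, which is what a naive Wei--Wylie-type Laplacian comparison would require. The way around this is to resist estimating $\partial_r f$ on its own and instead keep the entire quantity $\int_0^r\partial_r f\,dt=f(r,\theta)-f(o)$ inside the exponential, so that it cancels directly against the factor $e^{-f}$ appearing in $\mathcal{J}_f$; this is precisely the mechanism that converts the linear-growth constant $\alpha$ into the shift $(n-1)\mapsto(n-1+\alpha)$ in the exponent of the volume bound. Since this lemma is quoted from Munteanu--Wang \cite{MW}, one may of course simply invoke their statement, but the above outline indicates why $(n-1+\alpha)$ is the sharp correction produced by the sublinear growth of $f$.
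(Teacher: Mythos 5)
The paper gives no proof of this lemma at all: it is imported verbatim from Munteanu--Wang \cite{MW}, so your closing remark that one may simply invoke their statement is in fact exactly what the paper does. Judged as a standalone argument, however, your sketch has a genuine gap at its central step. The claimed bound
$$J(r,\theta)\leq C(\sinh r)^{n-1}\exp\Bigl(\int_0^r\partial_r f\,dt\Bigr),$$
which is the same as $e^{-f}J(r,\theta)\leq C(\sinh r)^{n-1}e^{-f(o)}$, i.e.\ $\int_0^r\Delta_f r\,dt\leq(n-1)\ln\sinh r+C$, does not follow from the Riccati comparison. The weighted Bochner formula gives $\partial_r(\Delta_f r)\leq(n-1)-\tfrac{1}{n-1}(\Delta r)^2$, where the quadratic term involves the \emph{unweighted} mean curvature $\Delta r$, which differs from $\Delta_f r$ by the uncontrolled quantity $\partial_r f$; so the ODE comparison with the $\coth$ barrier cannot be closed for $\Delta_f r$. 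Keeping $\int_0^r\partial_r f\,dt$ inside the exponential does not repair this: that cancellation is just the identity $\partial_r\ln(e^{-f}J)=\Delta_f r$ and leaves the Riccati obstruction untouched. In fact the intermediate bound is false under the hypotheses of the lemma: take $\RR^n$ with the Euclidean metric and $f=-\alpha\phi(r)$, where $\phi$ is smooth, $\phi\equiv 0$ for $r\leq\alpha/(n-1)$, $\phi(r)=r$ for large $r$, $0\leq\phi'\leq 1$ and $\phi''\leq(n-1)/\alpha$. Then $\Ric_f=\Hess f\geq-(n-1)$ and $|f|\leq\alpha r$, yet $e^{-f}J=e^{\alpha r}r^{n-1}$ for large $r$, which for $\alpha>n-1$ grows strictly faster than $(\sinh r)^{n-1}$. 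This also exposes an internal inconsistency in your bookkeeping: if the intermediate bound held, integrating it would give the exponent $(n-1)R$ with no $\alpha$ at all, while after multiplying $J$ by $e^{-f}$ there is no ``remaining $f(r,\theta)-f(o)$'' left to absorb, so the stated source of the $+\alpha$ in your final estimate evaporates.

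The actual mechanism in \cite{MW} is more delicate. One multiplies the Riccati inequality by $\sinh^2 t$ and integrates by parts twice, so that the error terms involve only $f$ itself (never $\partial_r f$ or $\Hess f$); this yields a comparison for $\Delta_f r$ whose error is linear in $r$ under $|f|\leq\alpha r+\beta$. Even then a naive radial integration of that pointwise bound only produces $e^{cR^2}$, so one must estimate $\ln\bigl(e^{-f}J(t_2,\theta)\bigr)-\ln\bigl(e^{-f}J(t_1,\theta)\bigr)$ directly on annuli, using the growth of $f$ only at the two endpoints, to recover the linear exponent $(n-1+\alpha)R$. If you want a self-contained proof you need to reproduce that argument; the Bishop--Gromov scheme as you have set it up does not go through.
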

Now, we will combine Lemma \ref{vc} and Corollary \ref{mrl12} to count number of cusps. The first result is as follows.
\begin{theorem}\label{main2}
Let $(M, g, e^{-f}dv)$ be a smooth metric measure space with $Ric_f\geq -(n-1)$. Suppose that the weighted function is of sublinear growth, namely, for all $x\in M$,
$$ |f|(x)\leq \alpha r(x)+\beta,$$
for some nonnegative constants $\alpha$ and $\beta$. If $M$ has finite $f$-volume given by $V$, and
$$ \lambda_{1, f}(M\setminus B_o(R_0))\geq \frac{(n-1+\alpha)^2}{4},$$
for some $R_0>0$, then
$$ N(M)\leq C e^{(n-1+\alpha)R_0}\frac{V}{V_{f}(B(o,1))}.$$
Here $C>0$ is a constant depending on $\lambda_{1, f}$.
\end{theorem}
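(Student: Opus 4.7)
The plan is to combine the $f$-volume decay estimate of Corollary \ref{mrl12} with the weighted volume comparison of Lemma \ref{vc}. First I would let $E_1,\ldots,E_N$ denote the connected components of $M\setminus\overline{B_o(R_0)}$; these are the ends of $M$ relative to $B_o(R_0)$. Since $V_f(M)<\infty$, each $E_i$ has finite $f$-volume and is therefore a cusp ($f$-parabolic end). By monotonicity of the bottom of the Dirichlet spectrum under domain inclusion, $\lambda_{1,f}(E_i)\geq \lambda_{1,f}(M\setminus B_o(R_0))\geq (n-1+\alpha)^2/4$, so $\sqrt{\lambda_{1,f}(E_i)}\geq(n-1+\alpha)/2$. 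I would then apply Corollary \ref{mrl12} to each $E_i$ with starting radius $R_0$, noting that $V_{E_i,f}(R_0)=0$ because $E_i$ lies entirely outside $B_o(R_0)$.

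Summing the resulting shell inequality over $R=R_0+1,R_0+2,\ldots$ produces a geometric series (the exponential factor decays because $\sqrt{\lambda_{1,f}}>0$), yielding
$$
V_{E_i,f}(\infty)\leq C_1\,V_{E_i,f}(R_0+1),
$$
where $C_1$ depends only on $n$ and $\alpha$. Thus the total $f$-volume of each cusp is already controlled by the volume of its initial shell $E_i\cap B(o,R_0+1)$.

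The second ingredient is a uniform lower bound of the form $V_{E_i,f}(R_0+1)\geq c\,e^{-(n-1+\alpha)R_0}V_f(B(o,1))$. For each $i$, I would select a point $p_i\in\overline{E_i}$ with $d(o,p_i)=R_0$. Since $B(o,1)\subset B(p_i,R_0+1)$, Lemma \ref{vc} applied at $p_i$ yields
$$
V_f(B(p_i,1))\geq C^{-1}e^{-(n-1+\alpha)(R_0+1)}\,V_f(B(o,1)).
$$
I expect the main obstacle to be showing that a definite fraction of $V_f(B(p_i,1))$ actually lies inside $E_i\cap B(o,R_0+1)$. The heuristic is that $p_i$ sits on $\partial B_o(R_0)$, so at least a fixed fraction of any small ball around $p_i$ points into the component $E_i$; quantitatively this can be handled either by a Bishop--Gromov-style relative comparison, or by displacing $p_i$ slightly into $E_i$ so that a ball of a fixed smaller radius is wholly contained in $E_i\cap B(o,R_0+1)$ and then invoking Lemma \ref{vc} once more at the displaced center.

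Once the lower bound $V_{E_i,f}(R_0+1)\geq c_2\,e^{-(n-1+\alpha)R_0}V_f(B(o,1))$ is in hand, the disjointness of the ends delivers the conclusion at once:
$$
V\;\geq\;\sum_{i=1}^N V_{E_i,f}(\infty)\;\geq\;\sum_{i=1}^N V_{E_i,f}(R_0+1)\;\geq\;N\,c_2\,e^{-(n-1+\alpha)R_0}\,V_f(B(o,1)),
$$
which rearranges to $N\leq C\,e^{(n-1+\alpha)R_0}\,V/V_f(B(o,1))$, as required.
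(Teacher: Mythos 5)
There is a genuine gap, and it is structural rather than technical. Your final chain of inequalities bounds $N$, the number of connected components of $M\setminus\overline{B_o(R_0)}$ --- that is, the quantity $N(R_0)$ --- whereas the theorem asserts a bound on $N(M)=\lim_{R\to\infty}N(R)$, where $N(R)$ is the number of unbounded components of $M\setminus B(o,R)$. These are not the same: a single component of $M\setminus B(o,R_0)$ can branch into many ends at larger radii (think of a pair-of-pants splitting, iterated), so $N(M)$ can strictly exceed, and is in general not controlled by, $N(R_0)$. A telltale symptom is that in your concluding display
$$
V\;\geq\;\sum_{i=1}^N V_{E_i,f}(\infty)\;\geq\;\sum_{i=1}^N V_{E_i,f}(R_0+1)\;\geq\;N\,c_2\,e^{-(n-1+\alpha)R_0}\,V_f(B(o,1)),
$$
neither the decay estimate of Corollary \ref{mrl12} nor the spectral hypothesis $\lambda_{1,f}(M\setminus B_o(R_0))\geq (n-1+\alpha)^2/4$ is actually used (the step $V_{E_i,f}(\infty)\geq V_{E_i,f}(R_0+1)$ is trivial); a proof of the theorem that never invokes its key hypothesis cannot be right, since the conclusion fails without a spectral gap.

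The paper's proof avoids this by running the counting argument at \emph{every} large radius $R$ and only then letting $R\to\infty$. For each unbounded component of $M\setminus B(o,R)$ one picks $y_i\in\partial B(o,R+1)$; the balls $B(y_i,1)$ are pairwise disjoint and sit in the annulus $B(o,R+2)\setminus B(o,R)$. Lemma \ref{vc} applied at $y_i$ (using $B(o,1)\subset B(y_i,R+2)$) gives the lower bound $V_{y_i,f}(1)\geq C_1^{-1}e^{-(n-1+\alpha)R}V_{o,f}(1)$, which \emph{degrades} as $R$ grows; this loss is exactly compensated by the upper bound on the annulus volume $V_{o,f}(R+2)-V_{o,f}(R)\leq C\,e^{(n-1+\alpha)(R_0-R)}\big(V_{o,f}(R_0+1)-V_{o,f}(R_0)\big)$ coming from Corollary \ref{mrl12} together with $2\sqrt{\lambda_{1,f}}\geq n-1+\alpha$. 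The two exponentials in $R$ cancel, yielding a bound on $N(R)$ that is uniform in $R$, and hence a bound on $N(M)$. This cancellation is precisely where the spectral hypothesis enters, and it is the step your argument is missing. (Your secondary worry --- capturing a definite fraction of a ball inside $E_i$ --- is handled the same way in the paper: a point at distance $R+1$ has its unit ball entirely inside the component and inside the annulus, so no fractional argument or rescaled comparison lemma is needed.)
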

\begin{proof}[Proof of Theorem \ref{main2}]To simple the notation, we denote $V_{o, f}(R)$ to be the weighted volume of the geodesic ball $B(o,R)$, then for all $R>2(R_0+1)$, we have
\begin{equation}\label{241-31}
\begin{aligned}
V_{o, f}(R+2)&-V_{o, f}(R)\\
&\leq C\left(1+\frac{1}{R-R_0}\right)e^{(n-1+\alpha)(R_0-R)}(V_{o, f}(R_0+1)-V_{o, f}(R_0)).
\end{aligned}
\end{equation}
Here we used Lemma \ref{mrl12}.

On the other hand, if $y\in\partial B(o,R+1)$ then $B(o, 1)\subset B(y, R+2)$. Hence, we use Lemma \ref{vc} to obtain
\begin{equation}\label{241-32}
V_{o, f}(1)\leq V_{y, f}(R+2)\leq C_1e^{(n-1+\alpha)R}V_{y, f}(1)
\end{equation}
Suppose that $M\setminus B(o,R)$ has $N(R)$ unbounded components, then there exist $N(R)$ number of points $\{y_i\in \partial B(o,R+1)\}$ such that $B({y_i},1)\cap B({y_j},1)=\oslash$ for $i\neq j$. In particular, applying \eqref{241-32} to each of the $y_i$ and combining with \eqref{241-31}, we have
\begin{align*}
N(R)C_1^{-1}e^{-(n-1+\alpha)R}V_{o, f}(1)&\leq \sum_{i=1}^{N(R)}V_{y_i, f}(1)\\
&\leq V_{o, f}(R+2)-V_{o, f}(R)\\
&\leq C\left(1+\frac{1}{R-R_0}\right)e^{(n-1+\alpha)(R_0-R)}(V_{o, f}(R_0+1)-V_{o, f}(R_0)).
\end{align*}
This implies that
$$
N(R)\leq C C_1(1+(R-R_0)^{-1})e^{(n-1+\alpha)R_0}(V_{o,f}(R_0+1)-V_{o,f}(R_0))V_{o,f}^{-1}(1).
$$
Note that $N(R)$ is the number of ends of $M$ with respect to $B(o, R)$, letting $R\to \infty$, we complete the proof.
\end{proof}
%\section{Proof of Theorem \ref{main3}}
Next, we will derive a weighted version of a Li-Wang's result in \cite{LW2010} to estimate $\lambda_{1, f}(B_o(R))$ of a geodesic ball centered at $o$ with radius $R$ in terms of the weighted volume of the ball. It is worth to mention that as in \cite{LW2010}, we do not require any curvature assumptions on $M$.
\begin{lemma}\label{lm2.2}
Let $(M,g,e^fdv)$ be a complete smooth metric measure space. Then for any $0<\delta<1, R>2$ and $o\in M$, we have
$$\lambda_{1,f}(B_o(R))\leq\dfrac{1}{4\delta^2(R-1)^2}\Big(\ln \Big(\dfrac{V_{o,f}(R)}{V_{o,f}(1)}\Big)+\ln \Big(\dfrac{81}{1-\delta}\Big)\Big)^2$$
\end{lemma}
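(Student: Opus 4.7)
The plan is to apply the variational characterization of $\lambda_{1,f}(B_o(R))$,
$$\lambda_{1,f}(B_o(R)) = \inf\left\{\frac{\int_{B_o(R)}|\nabla\phi|^2 e^{-f}\,dv}{\int_{B_o(R)}\phi^2 e^{-f}\,dv} : \phi \in C_c^\infty(B_o(R)),\ \phi\not\equiv 0\right\},$$
and to exhibit an explicit radial test function whose Rayleigh quotient realizes the asserted bound. Since the lemma assumes no curvature hypothesis, only the one-dimensional weighted volume profile $V_{o,f}(r)$ enters, and the coarea formula reduces both integrals to integrals against $dV_{o,f}(r)$.

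Concretely, I would take $\phi(x) = \psi(r(x))$ where $\psi : [0,R] \to [0,\infty)$ is built in three pieces with a free parameter $\alpha>0$ to be optimized: $\psi \equiv 1$ on $[0,1]$; $\psi(r) = e^{-\alpha(r-1)}$ on the intermediate annulus $[1,\,1+\delta(R-1)]$; and $\psi$ linearly tapered from $e^{-\alpha\delta(R-1)}$ down to $0$ on the outer annulus $[1+\delta(R-1),\,R]$ of width $(1-\delta)(R-1)$. Restricting the denominator to $[0,1]$ yields the immediate lower bound $\int \phi^2 e^{-f}\,dv \geq V_{o,f}(1)$. For the numerator, the exponential and linear regions are estimated separately; combined with a pigeonhole/telescoping argument on $V_{o,f}$ across a partition of the intermediate annulus, one locates a favorable cutoff radius and arranges for the ratio $V_{o,f}(R)/V_{o,f}(1)$ to appear only through an exponential factor $e^{-2\alpha\delta(R-1)}$ rather than as a crude multiplicative prefactor.

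Tuning $\alpha$ so that $e^{2\alpha\delta(R-1)} = \tfrac{81}{1-\delta}\cdot\tfrac{V_{o,f}(R)}{V_{o,f}(1)}$, i.e.\
$$\alpha = \frac{1}{2\delta(R-1)}\left(\ln\frac{V_{o,f}(R)}{V_{o,f}(1)} + \ln\frac{81}{1-\delta}\right),$$
balances the exponential-region and linear-region contributions and produces the asserted bound $\lambda_{1,f}(B_o(R)) \leq \alpha^2$.

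The main obstacle is sharpness of the numerator estimate. The naive bound $V_{o,f}(r) \leq V_{o,f}(R)$ applied to $\int_1^{1+\delta(R-1)} \alpha^2 e^{-2\alpha(r-1)}\, dV_{o,f}(r)$ leaves a factor $V_{o,f}(R)/V_{o,f}(1)$ outside the logarithm and gives the wrong form; to recover $e^{-2\alpha\delta(R-1)}$, the exponential decay of $\psi$ must be combined with a telescoping-type argument extracting a good intermediate radius where consecutive volume ratios grow at most like $(V_{o,f}(R)/V_{o,f}(1))^{1/N}$. Tracking the resulting numerical constants through the linear-tail contribution and the outer-annulus width $(1-\delta)(R-1)$ produces the specific constant $\ln(81/(1-\delta))$.
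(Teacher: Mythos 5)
Your overall setup (an exponentially decaying radial test function, denominator bounded below by $V_{o,f}(1)$, decay rate tuned so that $e^{2\alpha\delta(R-1)}=\frac{81}{1-\delta}\frac{V_{o,f}(R)}{V_{o,f}(1)}$) is in the right family, but the proposal has a genuine gap, and it sits exactly where you placed your vaguest step. First, the difficulty you identify in the intermediate annulus is not actually a difficulty: on $[1,1+\delta(R-1)]$ you have $|\nabla\psi|=\alpha\psi$, so $\int_{\{1\le r\le 1+\delta(R-1)\}}|\nabla\psi|^2e^{-f}=\alpha^2\int_{\{1\le r\le 1+\delta(R-1)\}}\psi^2e^{-f}$, which is just $\alpha^2$ times part of the denominator --- no pigeonhole or telescoping is needed, and no factor $V_{o,f}(R)/V_{o,f}(1)$ ever appears there. (Moreover, a pigeonhole argument of the kind you sketch, choosing a good annulus where the volume ratio is at most $(V_{o,f}(R)/V_{o,f}(1))^{1/N}$, would lose the constant $\tfrac14$ in front of $\ln^2$, which the paper needs later when it sets $R_0$ in the proof of its main theorem.) The real gap is that your argument, done correctly, yields
\begin{equation*}
\lambda_{1,f}(B_o(R))\le \alpha^2+\frac{e^{-2\alpha\delta(R-1)}}{(1-\delta)^2(R-1)^2}\cdot\frac{V_{o,f}(R)}{V_{o,f}(1)},
\end{equation*}
i.e.\ $\alpha^2$ \emph{plus} an additive error from the linear taper, not the clean $\lambda_{1,f}\le\alpha^2$ you assert. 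With your choice of $\alpha$ the error equals $\frac{1}{81(1-\delta)(R-1)^2}$, and absorbing it into the stated bound (by shrinking $\alpha$ slightly) fails when $\delta$ is close to $1$ --- which is precisely the regime $\delta=1-1/\ln(V/V_{o,f}(1))$ used in the proof of Theorem 3.5. So as written the proposal does not prove the lemma in the stated form.

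The paper's proof avoids this by a bootstrap rather than an optimization: it tests the variational inequality $\lambda_{1,f}\int\varphi^2e^{-f}\le\int|\nabla\varphi|^2e^{-f}$ with $\varphi=\phi\,e^{-\delta\sqrt{\lambda_{1,f}}\,r}$, where the decay rate is the (unknown) eigenvalue itself and $\phi$ is $1$ on $B_o(R-\lambda_{1,f}^{-1/2})$ with a linear taper of width $\lambda_{1,f}^{-1/2}$ (after first disposing of the case $\lambda_{1,f}<4/R^2$, which is needed to keep $\phi\equiv1$ on $B_o(1)$). Expanding the gradient, the term $\delta^2\lambda_{1,f}\int\phi^2e^{-2\delta\sqrt{\lambda_{1,f}}r}e^{-f}$ is moved to the left-hand side, so everything that survives on the right is supported on the thin outer annulus, and one lands directly on $e^{2\delta\sqrt{\lambda_{1,f}}(R-1)}\le\frac{27}{1-\delta}\frac{V_{o,f}(R)}{V_{o,f}(1)}$ with no additive remainder; taking logarithms gives the lemma. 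If you want to salvage your version, either adopt this absorption-plus-bootstrap mechanism, or carry the additive error explicitly and verify (uniformly in $\delta$) that it is dominated by the slack in the constant $81$ --- the latter is exactly the step your proposal currently asserts without proof.
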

\begin{proof}Observe that
$$\dfrac{4}{R^2}\leq \dfrac{1}{4\delta^2(R-1)^2}\ln^2\Big(\dfrac{81}{1-\delta}\Big)$$
Hence, we may assume $\lambda_{1, f}(B_o(R))\geq\dfrac{4}{R^2}$.

To simple the notation, let us use $\lambda_{1,f}$ to denote $\lambda_{1,f}(B_o(R))$. By the variational characteristic of $\lambda_{1,f}(B_o(R))$, we have
 \begin{align*}
 \lambda_{1,f}&\int_M\phi^2\exp (-2\delta\sqrt{\lambda_{1, f}}r)e^{-f}\leq \int_M |\nabla(\phi \exp (-\delta\sqrt{\lambda_{1, f}}r))|^2e^{-f}\notag\\&
=\int_{M}|\nabla \phi|^2\exp (-2\delta \sqrt{\lambda_{1, f}}r)e^{-f}-2\delta
\sqrt{\lambda_{1, f}}\int_M \phi \exp (-2\delta \sqrt{\lambda_{1, f}}r)\langle \nabla\phi,\nabla r\rangle e^{-f}\notag\\&
+\delta^2 \lambda_{1, f} \int_{ M} \phi^2 \exp (-2\delta\sqrt{\lambda_{1, f}}r)e^{-f},
\end{align*}
for any nonnegative Lipschitz function $\phi$ with support in $B_o(R)$. Consequently,
\begin{align*}
\lambda_{1, f}&(1-\delta^2)\int_M\phi^2\exp (-2\delta\sqrt{\lambda_{1, f}}r)e^{-f}\notag\\&\leq \int_{M}|\nabla \phi|^2\exp (-2\delta \sqrt{\lambda_{1, f}}r)e^{-f}-2\delta
\sqrt{\lambda_{1, f}}\int_M \phi \exp (-2\delta \sqrt{\lambda_{1, f}}r)\langle \nabla\phi,\nabla r\rangle e^{-f}.
\end{align*}
In particular, for $R>2,$ we choose
$$\phi=\begin{cases}1 \ \ \mbox{on} \ \ B_o(R-\lambda_{1, f}^{1/2})\\
\sqrt{\lambda_{1, f}}(R-r) \ \ \mbox{on} \ \ B_o(R)\backslash B_o(R-\lambda_{1, f}^{1/2})\\
0\ \ \mbox{on} \ \ M\backslash B_o(R)
\end{cases}$$
then $\phi =1$ on $B_o(1)$ since $R-\lambda_{1,f}^{1/2}\geq R/2>1$. Plugging $\phi$ in the above inequality, we obtain
\begin{align*}
(1-\delta^2)&\lambda_{1, f} \exp (-2\delta\sqrt{\lambda_{1, f}})V_{p,f}(1)\\
&\leq(1-\delta^2)\lambda_{1,f} \int_{M}\phi^2\exp (-2\delta\sqrt{\lambda_{1, f}}r)e^{-f}\\
&=\int_{M}|\nabla\phi|^2e^{-2\delta\sqrt{\lambda_{1,
f}}r}e^{-f}-2\delta\sqrt{\lambda_{1, f}}\int_{M}\phi e^{
-2\delta\sqrt{\lambda_{1, f}}r}\langle \nabla\phi,\nabla r\rangle
e^{-f}\\
& \leq \lambda_{1, f}\int\limits_{B_o(R)\setminus
B_o(R-\lambda_{1,f}^{1/2})}e^{
-2\delta\sqrt{\lambda_{1, f}}r}e^{-f}+2\delta\lambda_{1,
f}\int\limits_{B_o(R)\setminus B_o(R-\lambda_{1,f}^{1/2})}\phi e^{
-2\delta\sqrt{\lambda_{1, f}}r} e^{-f}\\
&\leq (1+2\delta)
\lambda_{1, f}e^{-2\delta(\sqrt{\lambda_{1, f}}R-1)}V_{o,f}(R).
\end{align*}
Here we used $\sqrt[]{\lambda_{1, f}}(R-r)\leq 1$ on $B_o(R)\setminus B_o(R-\lambda_{1,f}^{1/2})$ in the last inequality. Therefore,
$$e^{2\delta\sqrt{\lambda_{1, f}}(R-1)}\leq\dfrac{(1+2\delta)e^{2\delta}}{1-\delta^2}\dfrac{V_f(B_o(R))}{V_f(B_o(1))}\leq\dfrac{27}{1-\delta}\dfrac{V_f(B_o(R))}{V_f(B_o(1))}.$$
This implies
$$2\delta\sqrt{\lambda_{1, f}}(R-1)\leq\ln \Big(\dfrac{27}{1-\delta}\Big)+\ln\Big(\dfrac{V_f(B_o(R))}{V_f(B_o(1))}\Big)$$
The lemma follows by rewriting this inequality.
\end{proof}
Note that $\lambda_{1, f}(M)=\lim_{R\rightarrow\infty}\lambda_{1, f}(B_o(R))$. If we first let $R$ going to infinity and then $\delta$ going to $1$ in the estimate of Lemma \ref{lm2.2}, we have the following result.
\begin{corollary}Let $(M,g,e^fdv)$ be a complete smooth metric measure space. Assume for some nonnegative constants $\alpha$ and $\beta$, $$ |f|(x)\leq \alpha r(x)+\beta$$
for $x\in M$ and $\lambda_{1, f}(M)$ its bottom spectrum. Then
$$\lambda_{1, f}(M)\leq\dfrac{1}{4}\Big(\liminf_{R\rightarrow\infty}\dfrac{\ln V_f(B_o(R))}{R}\Big)^2$$
\end{corollary}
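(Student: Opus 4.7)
The plan is to derive the corollary directly from Lemma~\ref{lm2.2} by taking the double limit indicated in the sentence preceding the statement: first let $R\to\infty$ and then let $\delta\to 1^{-}$. The preliminary ingredient is the standard identity
$$\lambda_{1,f}(M)=\lim_{R\to\infty}\lambda_{1,f}(B_o(R)),$$
which follows from the variational characterization of $\lambda_{1,f}$ together with the monotonicity of $\lambda_{1,f}(B_o(R))$ as a nonincreasing function of $R$. This lets us replace $\lambda_{1,f}(B_o(R))$ on the left of Lemma~\ref{lm2.2} by $\lambda_{1,f}(M)$ after taking a liminf in $R$ on the right.

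Concretely, I would rewrite the inequality of Lemma~\ref{lm2.2} as
$$\lambda_{1,f}(B_o(R))\leq \frac{1}{4\delta^2}\left(\frac{\ln V_{o,f}(R)+C_\delta}{R-1}\right)^2,\qquad C_\delta:=\ln\frac{81}{1-\delta}-\ln V_{o,f}(1),$$
so that $C_\delta$ depends only on $\delta$. Because $V_{o,f}(R)$ is nondecreasing in $R$ and bounded below by $V_{o,f}(1)>0$ for $R\geq 1$, the quantity $\ln V_{o,f}(R)$ is bounded below, and hence
$$L:=\liminf_{R\to\infty}\frac{\ln V_{o,f}(R)}{R}\geq 0.$$
Choosing a subsequence $R_k\to\infty$ along which $\ln V_{o,f}(R_k)/R_k\to L$, the additive constant $C_\delta$ and the replacement of $R$ by $R-1$ in the denominator drop out in the limit, giving
$$\liminf_{R\to\infty}\frac{1}{4\delta^2}\left(\frac{\ln V_{o,f}(R)+C_\delta}{R-1}\right)^2\leq \frac{L^2}{4\delta^2}.$$
Taking the liminf of both sides of the rewritten Lemma~\ref{lm2.2} therefore yields $\lambda_{1,f}(M)\leq L^2/(4\delta^2)$, and sending $\delta\to 1^{-}$ gives the claim.

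I do not foresee any genuine obstacle. The only delicate points are the nonnegativity of $L$, which is what allows the squaring to commute with the liminf, and the fact that one must take liminf rather than lim on the right-hand side since no upper bound on the growth of $V_{o,f}(R)$ is posited. Note that the sublinear growth hypothesis $|f|\leq \alpha r+\beta$ stated in the corollary is not actually invoked in the argument; it is inherited from the surrounding section, but Lemma~\ref{lm2.2} applies unconditionally.
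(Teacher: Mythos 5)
Your argument is correct and is exactly the paper's route: the paper proves this corollary by the single remark that $\lambda_{1,f}(M)=\lim_{R\to\infty}\lambda_{1,f}(B_o(R))$ and then letting $R\to\infty$ followed by $\delta\to 1$ in Lemma~\ref{lm2.2}, which is precisely what you carry out (with the useful extra care about $L\geq 0$ and passing to a minimizing subsequence). Your side observation that the sublinear-growth hypothesis on $f$ is never used is also accurate, since Lemma~\ref{lm2.2} requires no such assumption.
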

Now, we will give a proof of Theorem \ref{main3}.
\begin{proof}[Proof of Theorem \ref{main3}]
Note that $o\in M$ be a fixed point. For any $0<\delta<1,$ let $$R_0=\dfrac{1}{(n-1+\alpha)\delta}\Big(\ln \big(\dfrac{81}{1-\delta}\Big)+\ln \Big(\dfrac{V}{V_{o,f}(1)}\Big)\Big)+3.$$
Thanks to Lemma \ref{lm2.2}, we have
\begin{equation}\label{pt1}
\lambda_{1, f}(B_o(R_0))\leq\dfrac{\delta^2(n-1+\alpha)^2}{4\delta^2}\dfrac{\left(\ln \Big(\dfrac{V_{o,f}(R)}{V_{o,f}(1)}\Big)+\ln \Big(\dfrac{81}{1-\delta}\Big)\right)^2}{\left(\ln\Big(\dfrac{81}{1-\delta}\Big)+\ln \Big(\dfrac{V}{V_{o,f}(1)}\Big)\right)^2}\leq \dfrac{(n-1+\alpha)^2}{4}
\end{equation}
On the other hand, by the variational principle, we have
$$\mu_1(M)\leq\max \{\lambda_{1, f}(B_o(R_0)), \lambda_{1, f}(M\backslash B_o(R_0))\}.$$
Hence, combining this inequality with assumption regarding to $\mu_1(M)$, we infer
$$\lambda_{1, f}(M\backslash B_o(R_0))\geq \dfrac{(n-1+\alpha)^2}{4}.$$
So Theorem \ref{main2} implies
$$N(M)\leq C(n)V_fV^{-1}_f(B_o(1))\exp ((n-1+\alpha)R_0).$$
To finish the proof, we first choose
$$\delta=1-\dfrac{1}{\ln (V_fV^{-1}_f(B_o(1)))}$$
then replace the value of $R_0, \delta$ in the last inequality. So we are done.
\end{proof}
As we mentioned in the introduction part, two main ingredients in our proof of Theorem \ref{main3} are the decay estimate of the volume and the volume comparison result. Therefore, using Lemma \ref{mrl12} and the volume comparison theorems in \cite{MW14} and \cite{Lam}, we can count the number of cusps on the following two manifolds.
\begin{theorem}Let $(M,g,e^fdv)$ be a complete smooth metric measure space with $Ric_f\geq\lambda$. Assume for some nonnegative constants $\alpha$ and $\beta$,
$$ |f|(x)\leq \alpha r(x)+\beta$$
for all $x\in M$. If $M$ has finite $f$-volume given by $V$, and
$$\mu_1(M)\geq\dfrac{(n-1)(\alpha-\lambda)}{4}$$
Let us denote $N(M)$ to be the number of ends (cusps) of $M$. Then there exists a contant $C(n)>0$ depending only on $n$, such that,
$$N(M)\leq C(n)\Big(\dfrac{V}{V_{o}(1)}\Big)^2\ln\Big(\dfrac{V}{V_{o}(1)}\Big)$$
where $V_{o}(1)$ denotes the (non-weighted) volume of the unit ball centered at any fixed point $o\in M$. Here $\mu_1(M)$ is defined by the (non-weighted) Reileigh quotient,
$$\mu_1(M)=\inf\limits_{\phi\in H^1(M), \int_M\phi=0}\frac{\int_M|\nabla\phi|^2}{\int_M\phi^2}.$$
\end{theorem}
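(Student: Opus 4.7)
The plan is to mirror the proof of Theorem \ref{main3} with two substitutions: Lemma \ref{vc} is replaced by the corresponding comparison theorem from \cite{MW14} (or \cite{Lam}), which is valid under the more general hypothesis $Ric_f\geq\lambda$ with $|f|\leq\alpha r+\beta$, and the final bound is converted from weighted to non-weighted form using the sublinear control on $|f|$. The comparison theorem in \cite{MW14} yields an estimate of the shape
$$V_f(B(o,R))\leq C\,e^{\sqrt{(n-1)(\alpha-\lambda)}\,R}V_f(B(o,1)),$$
whose exponent is precisely the square root of four times the constant $(n-1)(\alpha-\lambda)/4$ appearing in the hypothesis on $\mu_1(M)$. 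This matching is what makes the whole argument close.

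First, I would apply Lemma \ref{lm2.2} at a radius $R_0$ chosen so that $\lambda_{1,f}(B_o(R_0))\leq(n-1)(\alpha-\lambda)/4$; following the template of Theorem \ref{main3}, the natural choice is
$$R_0=\frac{1}{\sqrt{(n-1)(\alpha-\lambda)}\,\delta}\left(\ln\frac{81}{1-\delta}+\ln\frac{V}{V_{o,f}(1)}\right)+3,$$
for a parameter $\delta\in(0,1)$ to be optimized at the end. Since $|f|\leq\alpha r+\beta$, the weighted and non-weighted Rayleigh quotients on a fixed bounded domain differ only by a factor controlled by $e^{2(\alpha R_0+\beta)}$; this lets me translate the hypothesis $\mu_1(M)\geq(n-1)(\alpha-\lambda)/4$ (stated for the non-weighted quotient) into the corresponding lower bound for $\lambda_{1,f}(M\setminus B_o(R_0))$ via the splitting $\mu_1(M)\leq\max\{\lambda_1(B_o(R_0)),\lambda_1(M\setminus B_o(R_0))\}$, placing Corollary \ref{mrl12} at our disposal on the complement.

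Next, I would carry out the packing-and-summing step of Theorem \ref{main2}. Pick one point $y_i\in\partial B(o,R+1)$ in each of the $N(R)$ unbounded components of $M\setminus B(o,R)$, so that the unit balls $B(y_i,1)$ are pairwise disjoint. Combining the upper bound on $V_{o,f}(R+2)-V_{o,f}(R)$ given by Corollary \ref{mrl12} with the weighted comparison $V_{o,f}(1)\leq V_{y_i,f}(R+2)\leq Ce^{\sqrt{(n-1)(\alpha-\lambda)}R}V_{y_i,f}(1)$, summing over $i$, and sending $R\to\infty$ produces
$$N(M)\leq C(n)\,V_{o,f}^{-1}(1)\,\bigl(V_{o,f}(R_0+1)-V_{o,f}(R_0)\bigr)\,\exp\bigl(\sqrt{(n-1)(\alpha-\lambda)}\,R_0\bigr).$$
Substituting the chosen $R_0$, bounding $V_{o,f}(R_0+1)\leq V$, and finally choosing $\delta=1-1/\ln(V/V_{o,f}(1))$ exactly as in the final step of Theorem \ref{main3} produces the desired $(V/V_{o,f}(1))^2\ln(V/V_{o,f}(1))$ estimate. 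Converting $V_{o,f}(1)$ to the non-weighted $V_o(1)$ using $|f|\leq\alpha+\beta$ on $B(o,1)$ costs only a multiplicative constant depending on $\alpha$ and $\beta$, which is absorbed into $C(n)$.

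The main difficulty I anticipate is the dual weighted/non-weighted nature of the objects in play: the Rayleigh quotient defining $\mu_1(M)$ is non-weighted, while the decay estimate and the volume comparison are intrinsically weighted. The argument hinges on the sublinear growth of $|f|$, which allows free passage between the two up to multiplicative constants depending only on $\alpha$ and $\beta$; the delicate point is to check that none of these passages spoil the precise constant $(n-1)(\alpha-\lambda)/4$ needed to calibrate $R_0$ so that Lemma \ref{lm2.2} and the complement-eigenvalue lower bound fit together.
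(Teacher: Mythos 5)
The paper states this theorem without any proof: it is offered as a corollary of Corollary \ref{mrl12} together with the volume comparison of \cite{MW14}, run through the template of Theorems \ref{main2} and \ref{main3}. Your overall strategy is therefore exactly the intended one, and the calibration you identify --- a volume-growth exponent $a=\sqrt{(n-1)(\alpha-\lambda)}$ matching the threshold $a^2/4=(n-1)(\alpha-\lambda)/4$ --- is consistent (it reduces to $a=n-1$ when $f=0$ and $\lambda=-(n-1)$, recovering Theorem \ref{l1}).

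There is, however, a genuine gap in the step you yourself flag as delicate: the passage between weighted and non-weighted Rayleigh quotients. The factor $e^{2(\alpha R_0+\beta)}$ by which you propose to compare the two quotients on $B_o(R_0)$ is not a constant: with $R_0\sim\frac{1}{a\delta}\ln\big(V/V_{o,f}(1)\big)$ it is a power of $V/V_{o,f}(1)$, so it cannot be absorbed into $C(n)$, and it shifts the eigenvalue threshold $(n-1)(\alpha-\lambda)/4$ by a $V$-dependent amount, which destroys precisely the calibration between Lemma \ref{lm2.2} and the lower bound on the complement that makes the argument close. On $M\setminus B_o(R_0)$ the situation is worse: that set is unbounded, $|f|$ is unbounded on it, and a lower bound for the non-weighted $\lambda_1(M\setminus B_o(R_0))$ gives no positive lower bound for $\lambda_{1,f}(M\setminus B_o(R_0))$, which is what Corollary \ref{mrl12} needs in order to produce the decay of the $f$-volume of the ends. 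The argument must be run consistently in a single measure throughout --- either everything ($\mu_1$, the decay estimate, the ball eigenvalue upper bound, the volume comparison) with respect to $e^{-f}dv$ as in Theorem \ref{main3}, or everything with respect to $dv$ using a comparison for the unweighted volume under $Ric_f\geq\lambda$ and $|f|\leq\alpha r+\beta$; only the single conversion between $V_{o,f}(1)$ and $V_o(1)$ on the fixed unit ball is harmless. As written, your proof does not close.
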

\begin{theorem}Let $M$ be a complete noncompact $16$-dimensional manifold with holonomy group Spin(9) with $\mu_1(M)\geq 121$. Then there exists a contant $C(n)>0$ depending only on $n$, such that,
$$N(M)\leq C(n)\Big(\dfrac{V}{V_{o}(1)}\Big)^2\ln\Big(\dfrac{V}{V_{o}(1)}\Big)$$
where $V_{o}(1)$ denotes the (non-weighted) volume of the unit ball centered at any point $o\in M$. Here $\mu_1(M)$ is defined by the (non-weighted) Reileigh quotient,
$$\mu_1(M)=\inf\limits_{\phi\in H^1(M), \int_M\phi=0}\frac{\int_M|\nabla\phi|^2}{\int_M\phi^2}.$$
\end{theorem}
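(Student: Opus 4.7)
The plan is to transcribe the proof of Theorem \ref{main3} to the Spin($9$) setting with $f\equiv 0$, replacing Lemma \ref{vc} by the volume comparison result of Lam \cite{Lam} specifically tailored to complete noncompact $16$-dimensional manifolds of holonomy Spin($9$). The constant $121$ is precisely $(22/2)^2$, and $22$ is the exponential volume growth rate on the model Cayley hyperbolic plane $\mathbb{O}H^2 = F_{4(-20)}/\mathrm{Spin}(9)$. I expect the statement from \cite{Lam} to read
\[
V(B(o,R)) \leq C\, e^{22 R}\, V(B(o,1)) \qquad \text{for all } R>0,
\]
with $C$ an absolute constant. This plays the role of Lemma \ref{vc} with ``$n-1+\alpha$'' $=22$.

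With this comparison in hand I would first derive the non-weighted analog of Lemma \ref{lm2.2} (the proof there sets $f\equiv 0$ and uses no curvature hypothesis):
\[
\lambda_{1}(B(o,R)) \leq \frac{1}{4\delta^2 (R-1)^2}\paren{\ln\paren{V_o(R)/V_o(1)} + \ln\paren{81/(1-\delta)}}^2
\]
for every $0<\delta<1$ and $R>2$. Setting
\[
R_0 = \frac{1}{22\,\delta}\paren{\ln\paren{\tfrac{81}{1-\delta}} + \ln\paren{\tfrac{V}{V_o(1)}}} + 3
\]
and substituting forces $\lambda_{1}(B(o,R_0))<121$. The variational inequality $\mu_1(M)\leq \max\bra{\lambda_1(B(o,R_0)),\lambda_1(M\setminus B(o,R_0))}$ together with $\mu_1(M)\geq 121$ then gives $\lambda_1(M\setminus B(o,R_0))\geq 121$ on every end.

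From this spectral lower bound and Corollary \ref{mrl12} applied with $f\equiv 0$ and $\lambda_{1,f}=121$, one obtains the decay estimate
\[
V_o(R+1)-V_o(R) \leq C\paren{1+\tfrac{1}{R-R_0}} e^{-22(R-R_0)}\paren{V_o(R_0+1)-V_o(R_0)}.
\]
Selecting, for each unbounded component of $M\setminus B(o,R)$, a point $y_i\in \partial B(o,R+1)$, the unit balls $B(y_i,1)$ are pairwise disjoint and the Spin($9$) volume comparison yields $V_o(1)\leq C_1 e^{22R}V(B(y_i,1))$. Summing over $i$ and pairing this lower bound with the above decay estimate, exactly as in Theorem \ref{main2}, produces
\[
N(R) \leq C\paren{1+(R-R_0)^{-1}} e^{22 R_0}\paren{V_o(R_0+1)-V_o(R_0)}\,V_o(1)^{-1}.
\]
Letting $R\to\infty$, then choosing $\delta = 1 - 1/\ln(V/V_o(1))$ and substituting the value of $R_0$ as in the proof of Theorem \ref{main3}, yields the desired bound $N(M)\leq C(n)(V/V_o(1))^2\ln(V/V_o(1))$.

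The only non-routine ingredient is invoking the correct Spin($9$) volume comparison from \cite{Lam}: the main obstacle is verifying that the exponential rate is exactly $22R$, so that the critical constant $(22)^2/4=121$ matches the spectral hypothesis, and that no auxiliary curvature assumption beyond the holonomy restriction is needed. Once that rate is confirmed, the remainder is a literal transcription of the argument for Theorem \ref{main3} with $\alpha=0$ and $n-1$ uniformly replaced by $22$.
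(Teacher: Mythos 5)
Your proposal is correct and follows exactly the route the paper intends: the paper gives no written-out proof for this theorem, stating only that it follows by combining the decay estimate of Corollary \ref{mrl12} with Lam's volume comparison for Spin(9) holonomy and repeating the argument of Theorem \ref{main3}, which is precisely what you do (with the rate $22$ and critical constant $22^2/4=121$ correctly identified). No gaps.
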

\section{Counting cusps via the $p$-Laplacian}\label{section4}
In this section, we will use the nonlinear theory of $p$-Laplacian to estimate the number of cusps of Riemannian manifolds.
Again, our strategy is to use a (nonlinear) decay estimate of the volume and corresponding volume comparison theorem.
Therefore, let us recall the following nonlinear version regarding to the rates of volume decay.
\begin{theorem}[\cite{BK}]\label{BKp}
Let $E$ be an end of a complete Riemannian manifold $(M^n, g)$ with
respect to $\overline{B}(o, R_0)$. Suppose that the first
eigenvalues of the $p$-Laplacian $\lambda_{1,p}(E)>0, (1\leq
p<\infty)$. If $E$ has finite volume then the following decay
estimate
$$
V_{E}(R+1)-V_{E}(R)\leq Ce^{-p\lambda_{1,p}^{1/p}R}
$$
for all $R> R_0+2$.
\end{theorem}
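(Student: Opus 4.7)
The plan is to imitate the variational argument in the proof of Lemma \ref{1} (specialized to $u \equiv 1$ and $f \equiv 0$), replacing the linear Poincaré inequality by its nonlinear $p$-analogue. Writing $a := \lambda_{1,p}(E)^{1/p}$, so that the target decay rate in the exponent is $e^{-paR}$, the two main technical points will be to produce a weighted $p$-Poincaré inequality with exponential weight $e^{par}$, and then to iterate it on annular shells in the spirit of Steps~2--3 of the proof of Lemma \ref{1} to extract the shell-by-shell decay.

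For a smooth cutoff $\psi$ compactly supported in $E$, I would apply the $p$-Poincaré inequality $\lambda_{1,p}(E)\int |\phi|^p \le \int |\nabla\phi|^p$ to the test function $\phi = \psi e^{ar}$. Since $|\nabla r| = 1$, this yields
$$a^p \int_E \psi^p e^{par} \;\le\; \int_E \bigl|\nabla\psi + a\psi\nabla r\bigr|^p e^{par}.$$
In the linear case $p=2$ the $a^2\psi^2$ term appearing on the right after squaring cancels the left-hand side exactly; for general $p$ I would instead substitute the pointwise Young-type inequality $|X+Y|^p \le (1+\varepsilon)|Y|^p + C(p,\varepsilon)|X|^p$ with $X = \nabla\psi$ and $Y = a\psi\nabla r$, and absorb the resulting $(1+\varepsilon)a^p\int \psi^p e^{par}$ back into the left-hand side. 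This absorption is only legitimate once $\int \psi^p e^{par}$ is a priori finite, which I would first secure by running the same computation with the smaller exponent $\delta a$ for $\delta < 1$, exactly as in Step~1 of the proof of Lemma \ref{1}. The net outcome is an inequality of the form $\int_E \psi^p e^{par} \le C(p,\varepsilon)\int_E |\nabla\psi|^p e^{par}$. Choosing $\psi$ to equal $1$ on $E(R)\setminus E(R_0+1)$ and decaying linearly on two unit-width annuli, one obtains a recursive bound between the $e^{par}$-weighted volumes on consecutive shells, entirely analogous to \eqref{24}. Iterating this recursion, and using the finite-volume hypothesis on $E$ to kill the tail contributions, produces the uniform annular bound $\int_{E(R+1)\setminus E(R)} e^{par}\le C$, from which $V_E(R+1)-V_E(R) \le C e^{-paR} = C e^{-p\lambda_{1,p}^{1/p} R}$ follows at once.

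The main obstacle I anticipate is establishing the weighted $p$-Poincaré inequality cleanly across the entire range $1\le p<\infty$. The cases $p\ge 2$ and $1\le p<2$ behave oppositely with respect to the convexity of $|\cdot|^p$: for $p\ge 2$ one can begin with the bound $|Y+X|^p \ge |Y|^p + p|Y|^{p-2}\langle Y,X\rangle$ and absorb the cross term via a standard Young inequality, whereas for $1\le p<2$ a more delicate estimate of the form $|X+Y|^p \le |Y|^p + C|Y|^{p-1}|X| + C|X|^p$ combined with Hölder must be used. The constant multiplying the absorbed term $a^p\int \psi^p e^{par}$ must remain strictly less than $1$ so that the shell-by-shell iteration inherited from Step~2 of Lemma \ref{1} still converges; balancing this against the requirement that the cutoff contribution $\int |\nabla\psi|^p e^{par}$ does not blow up is where most of the technical bookkeeping lies.
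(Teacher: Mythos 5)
First, note that the paper does not actually prove this statement: its ``proof'' consists of quoting the first part of Theorem 0.1 of \cite{BK}, so any genuine argument you give is necessarily a different route from the paper's. Your plan --- specialize the machinery of Lemma \ref{1} to $u\equiv 1$, $f\equiv 0$, and replace the quadratic expansion by a $p$-th power estimate --- is the natural thing to try, and your subcritical Step-1 analogue is fine: with exponent $\delta a r$, $\delta<1$, the convexity bound $(|X|+|Z|)^p\le(1-\delta)^{1-p}|X|^p+\delta^{1-p}|Z|^p$ applied to $Z=\delta a\psi\nabla r$ leaves a genuinely positive coefficient $(1-\delta)a^p$ on the left, and one gets $V_E(R+1)-V_E(R)\le C_\delta e^{-p\delta a R}$ for every $\delta<1$.

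The gap is at the critical rate, which is exactly what the theorem asserts. Your central intermediate claim, the weighted inequality $\int_E\psi^p e^{par}\le C\int_E|\nabla\psi|^p e^{par}$ with $a=\lambda_{1,p}(E)^{1/p}$, cannot be reached by the absorption you describe, and is in general false. Any pointwise bound of the form $|X+Y|^p\le c\,|Y|^p+(\text{terms vanishing at }X=0)$ forces $c\ge 1$ (set $X=0$); with $c=1+\varepsilon$ the absorbed term overwhelms the left-hand side and you are left with $-\varepsilon a^p\int\psi^pe^{par}\le C\int|\nabla\psi|^pe^{par}$, which is vacuous, while with $c=1$ the cancellation is exact and you are left with $0\le(\text{remainder})$, which carries no information unless the remainder has usable sign structure. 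This is precisely how the $p=2$ proof of Lemma \ref{1} survives: the expansion $|X+Y|^2=|Y|^2+2\langle X,Y\rangle+|X|^2$ is exact, the $|Y|^2$ term cancels with coefficient exactly $1$, and the leftover cross term $2\phi e^{2ar}\langle\nabla\phi,\nabla r\rangle$ is \emph{negative on the outer cutoff annulus}; Steps 2--3 of Lemma \ref{1} are entirely an exploitation of that sign, not of a coercive weighted Poincar\'e inequality (Li--Wang never establish one at the critical rate, and on a model hyperbolic cusp $e^{ar}$ is essentially the extremizer, so none holds). Your requirement that ``the constant multiplying the absorbed term must remain strictly less than $1$'' is therefore unattainable in your framework. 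To rescue the argument for general $p$ you would need the exact first-order convexity inequality $|X+Y|^p\le|Y|^p+p|X+Y|^{p-2}\langle X+Y,X\rangle$ (or a variant keeping the $|Y|^p$-coefficient equal to $1$), and then redo the sign analysis and the three-step iteration for the resulting degenerate cross term $p|X+Y|^{p-2}\langle X+Y,\nabla\psi\rangle$; none of that is in your sketch, and it is where the real content of the Buckley--Koskela theorem lies. As written, your argument proves only the subcritical decay $e^{-p\delta\lambda_{1,p}^{1/p}R}$ for $\delta<1$, not the stated estimate.
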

\begin{proof}We note that in the proof of the first part of Theorem 0.1 in \cite{BK},
the author showed that for $R>R_0+2$, we have
$$V_E(R+1)-V_E(R)\leq Ce^{-p\lambda_{1,p}^{1/p}R}.$$
The proof is complete.
\end{proof}
We can estimate the number of cusps of smooth metric measure spaces as follows.
\begin{theorem}\label{4main1}
Let $(M^n, g)$ be a smooth metric measure space with $Ric\geq
-(n-1)$. If $M$ has finite volume given by $V$, and
$$ \lambda_{1, p}(M\setminus B_o(R_0))\geq \frac{(n-1)^p}{p^p},$$
for some $R_0>0$, then
$$ N(M)\leq C e^{(n-1)R_0}\frac{V}{V_0(1)}.$$
Here $C>0$ is a constant depending on $\lambda_{1, p}$.
\end{theorem}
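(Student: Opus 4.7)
The plan is to mirror the proof of Theorem~\ref{main2}, replacing its two ingredients by their $p$-Laplacian and unweighted counterparts: the decay estimate for volume of ends from Theorem~\ref{BKp}, and the classical Bishop--Gromov volume comparison theorem under the assumption $\Ric\geq -(n-1)$.

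First I would use the hypothesis $\lambda_{1,p}(M\setminus B_o(R_0))\geq (n-1)^p/p^p$, which rearranges to $p\lambda_{1,p}^{1/p}\geq n-1$. Applying Theorem~\ref{BKp} to each unbounded component of $M\setminus B_o(R_0)$ separately, and summing (using $\sum_i V_{E_i}(R_0+1)\leq V$), yields a bound of the form
$$V_o(R+2)-V_o(R)\leq C(\lambda_{1,p})\,V\,e^{-(n-1)(R-R_0)}$$
for all $R>R_0+2$, in direct analogy with estimate \eqref{241-31} in the proof of Theorem~\ref{main2}.

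Next I would invoke the Bishop--Gromov comparison theorem: for any $y\in\partial B_o(R+1)$, the inclusion $B_o(1)\subset B_y(R+2)$ together with $\Ric\geq-(n-1)$ gives
$$V_o(1)\leq V_y(R+2)\leq C_1 e^{(n-1)(R+1)}V_y(1).$$
Let $N(R)$ denote the number of unbounded components of $M\setminus B_o(R)$, and pick $y_i\in\partial B_o(R+1)$, one in each component, so that the balls $B(y_i,1)$ are pairwise disjoint and contained in $B_o(R+2)\setminus B_o(R)$. Summing the Bishop--Gromov lower bound $V_{y_i}(1)\geq C_1^{-1}e^{-(n-1)(R+1)}V_o(1)$ over $i$ and combining with the decay estimate gives
$$N(R)\cdot C_1^{-1}e^{-(n-1)(R+1)}V_o(1)\leq V_o(R+2)-V_o(R)\leq C\,V\,e^{-(n-1)(R-R_0)},$$
so $N(R)\leq C' e^{(n-1)R_0}\,V/V_o(1)$. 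Sending $R\to\infty$ yields the desired bound on $N(M)$.

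The main point demanding care is the precise shape of the constant in Theorem~\ref{BKp}: in order for the factor $e^{(n-1)R_0}$ to appear in the final estimate, one needs to read the decay in the refined form $C(\lambda_{1,p})\,V\,e^{-p\lambda_{1,p}^{1/p}(R-R_0)}$, in parallel with the form given for Corollary~\ref{mrl12}. Tracing through the argument in \cite{BK} confirms this dependence, and the only other mild technicality, namely the possible disconnectedness of $M\setminus B_o(R_0)$, is handled by the component-wise application already mentioned.
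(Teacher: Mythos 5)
Your proposal is correct and follows essentially the same route as the paper: the paper's own proof simply says to repeat the argument of Theorem~\ref{main2} with the decay estimate of Theorem~\ref{BKp} in place of Corollary~\ref{mrl12} and the (unweighted) volume comparison in place of Lemma~\ref{vc}, which is exactly what you carry out. Your added remark about tracing the constant in Theorem~\ref{BKp} so that the decay is measured from $R_0$ (yielding the factor $e^{(n-1)R_0}$) is the right point to flag, and it is the detail the paper leaves implicit.
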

\begin{proof}By using the decay estimate in Theorem \ref{BKp} and the volume comparison theorem \ref{vc}, we can repeat the proof of Theorem \ref{main2} to derive the conclusion. Since they are almost the same, we omit the detail.
\end{proof}
Next, we will estimate $\lambda_{1, p}$ on the ball $B(o, R)$.
\begin{lemma}\label{4lm2.2}
Let $(M^n,g)$ be a Riemannian manifold. Then for any $0<\delta< 1,
1\leq p<\infty$ and $o\in M$, we have
$$\lambda_{1,p}(B_o(R))\leq\dfrac{1}{\delta^p(R-1)^p}\Big(2+\frac{\alpha(p)}{p}\ln\dfrac{1}{1-\delta}+\dfrac{1}{p}\ln\dfrac{V_0(R)}{V_0(1)}\Big)^p$$
where $\alpha(p)=\max\{1, p/2\}$.
\end{lemma}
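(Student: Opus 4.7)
The plan is to imitate the proof of Lemma \ref{lm2.2} essentially verbatim, replacing the Dirichlet Rayleigh quotient by its $p$-Laplacian counterpart
$$
\lambda_{1,p}(B_o(R))\int_{B_o(R)}|\psi|^p \;\leq\; \int_{B_o(R)}|\nabla\psi|^p
$$
valid for every $\psi\in W^{1,p}_0(B_o(R))$. Writing $\lambda=\lambda_{1,p}(B_o(R))$, I would test against $\psi=\phi\,e^{-\delta\lambda^{1/p}r}$, where $\phi$ is the standard radial piecewise linear cutoff
$$
\phi=\begin{cases}
1 & \text{on }B_o(R-\lambda^{-1/p}),\\
\lambda^{1/p}(R-r) & \text{on the annulus }B_o(R)\setminus B_o(R-\lambda^{-1/p}),\\
0 & \text{outside }B_o(R).
\end{cases}
$$
As in Lemma \ref{lm2.2}, one first reduces to the non-trivial regime by observing that the inequality is automatic unless $\lambda\geq (R-1)^{-p}$, since the right-hand side of the claim already exceeds $(2/\delta)^p(R-1)^{-p}$. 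This assumption guarantees $B_o(1)\subset B_o(R-\lambda^{-1/p})$, so $\phi\equiv 1$ on $B_o(1)$.

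Next I would compute $|\nabla\psi|^p$ by cases. On the core $\phi\equiv 1$ and $|\nabla\psi|^p = \delta^p\lambda\,e^{-p\delta\lambda^{1/p}r}$; on the annulus $\nabla\phi=-\lambda^{1/p}\nabla r$ gives $|\nabla\psi|^p=\lambda(1+\delta\phi)^p\,e^{-p\delta\lambda^{1/p}r}$. Plugging these into the variational inequality and transferring the $\delta^p$-piece to the left produces
$$
\lambda(1-\delta^p)\int_{B_o(R-\lambda^{-1/p})}e^{-p\delta\lambda^{1/p}r}
\;\leq\;
\lambda\int_{\mathrm{annulus}}\bigl[(1+\delta\phi)^p-\phi^p\bigr]e^{-p\delta\lambda^{1/p}r}.
$$
The left-hand side is bounded below by $\lambda(1-\delta^p)e^{-p\delta\lambda^{1/p}}V_o(1)$. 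On the right, I would bound the pointwise integrand by a Young-type estimate and use $e^{-p\delta\lambda^{1/p}r}\leq e^{-p\delta\lambda^{1/p}R+p\delta}$ on the annulus together with the elementary $1-\delta^p\geq 1-\delta$. After these steps the two sides consolidate into
$$
e^{p\delta\lambda^{1/p}(R-1)} \;\leq\; \frac{C_p\,e^{p\delta}}{(1-\delta)^{\alpha(p)}}\cdot\frac{V_o(R)}{V_o(1)},
$$
and taking logarithms, dividing by $p$, and absorbing the additive constant $\ln C_p+\delta$ into $2$ yields the target
$$
\delta\,\lambda^{1/p}(R-1)\;\leq\;2+\frac{\alpha(p)}{p}\ln\frac{1}{1-\delta}+\frac{1}{p}\ln\frac{V_o(R)}{V_o(1)}.
$$

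The main obstacle I anticipate is producing the exponent $\alpha(p)=\max\{1,p/2\}$ with the correct dependence on $1-\delta$. A naive triangle-type bound $(1+\delta\phi)^p\leq 2^p$ suffices to give $\alpha(p)=1$ in the full range $p\geq 1$ (which already implies the claim for $p\leq 2$); for $p\geq 2$ the cleaner route is to apply a refined Young inequality of the form $(x+y)^p\leq (1+\varepsilon)^{p/2}x^p+C_p\,\varepsilon^{-p/2}y^p$ (morally Young at the exponent $p/2$, since $p\geq 2$) to the cross term, and then optimise $\varepsilon$ proportional to $1-\delta$ so that the $(1-\delta)^{-p/2}$ factor emerges. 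Threading this split between the $p\leq 2$ and $p\geq 2$ regimes so that the final additive constant collapses to $2$ is the most delicate bookkeeping in the argument; everything else is parallel to the weighted $p=2$ case in Lemma \ref{lm2.2}.
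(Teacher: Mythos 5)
Your proposal is correct and follows the paper's overall strategy: the same test function $\phi\,e^{-\delta\lambda^{1/p}r}$ in the $p$-Rayleigh quotient, the same piecewise-linear cutoff, the same preliminary reduction guaranteeing $B_o(1)$ lies in the region where $\phi\equiv 1$, and the same endgame of exponentiating, taking logarithms and dividing by $p$. Where you genuinely diverge is the middle step. The paper bounds $|\nabla\phi-\delta\lambda^{1/p}\phi\nabla r|^p$ by $\big(|\nabla\phi|^2+2\delta\lambda^{1/p}\phi|\nabla\phi||\nabla r|+\delta^2\lambda^{2/p}\phi^2\big)^{p/2}$ on the whole support and then splits into the cases $p\geq 2$ (weighted convexity of $x^{p/2}$, namely $(A+B)^{p/2}\leq(1-\delta^2)^{1-p/2}A^{p/2}+\delta^{2-p}B^{p/2}$) and $1\leq p<2$ (subadditivity) to peel off the $\delta$-term and absorb it on the left; this case analysis is precisely what generates the exponent $\alpha(p)=\max\{1,p/2\}$ on $(1-\delta)^{-1}$. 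Your exact core/annulus rearrangement, with the crude pointwise bound $(1+\delta\phi)^p-\phi^p\leq 2^p$ and $1-\delta^p\geq 1-\delta$, avoids the case split entirely and yields the factor $(1-\delta)^{-1}$, i.e.\ the lemma with $\alpha(p)$ replaced by $1$. Your closing worry is unfounded: since $\ln\frac{1}{1-\delta}>0$ and $\alpha(p)\geq 1$, the bound with exponent $1$ is \emph{stronger} than, hence implies, the stated bound for every $p\geq 1$; there is no need to manufacture the exponent $p/2$, and the refined Young inequality you sketch for $p\geq 2$ is essentially the paper's Case~1 and is unnecessary for the claim as written. The remaining bookkeeping is the additive constant, and $\ln 2+\delta<2$ closes it, so your ``naive'' route is already a complete, and in fact marginally sharper, proof.
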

\begin{proof}Observe that
$$\dfrac{2^p}{R^p}\leq \dfrac{1}{\delta^p(R-1)^p}\Big(2+\dfrac{\alpha(p)}{p}\ln\dfrac{1}{1-\delta}\Big)^p.$$

Hence, we may assume $\lambda_{1, p}(B_o(R))\geq\dfrac{2^p}{R^p}$.

To simple the notation, let us use $\lambda_{1,p}$ to denote $\lambda_{1,p}(B_o(R))$.
By the variational characteristic of $\lambda_{1,p}(B_o(R))$, we have
 \begin{align}
 \lambda_{1,p}&\int_M\phi^p\exp (-p\delta\sqrt[p]{\lambda_{1,
 p}}r)\notag\\&
\leq \int_M |\nabla(\phi \exp (-\delta\sqrt[p]{\lambda_{1,
p}}r))|^p\notag\\&
=\int_{M}e^{-p\delta\sqrt[p]{\lambda_{1,p}}r}|\nabla\phi-\delta\sqrt[p]{\lambda_{1,p}}\phi\nabla
r|^p\notag\\& =
\int_{M}e^{-p\delta\sqrt[p]{\lambda_{1,p}}r}\Big(|\nabla\phi|^2+2\delta\sqrt[p]{\lambda_{1,p}}|\nabla\phi||\nabla
r| +\delta^2 \lambda_{1,p}^{2/p}  \phi^2 \Big)^{p/2}, \label{case}
\end{align}
for any nonnegative Lipschitz function $\phi$ with support in
$B_o(R)$. We have two cases.

\noindent \textbf{Case 1: }$p\geq 2$. Observe that $x^{p/2}$ is a
convex function, we have the following basic inequality
$$(A+B)^{p/2}=\Big((1-\delta^2)\dfrac{A}{1-\delta^2}+\delta^2\dfrac{B}{\delta^2}\Big)^{p/2}
\leq (1-\delta^2)^{1-p/2}A^{p/2}+\delta^{2-p}B^{p/2},$$ for any $A,
B\geq0$. Hence, by \eqref{case}, we have
$$\begin{aligned}
\lambda_{1,p}\int_M\phi^p\exp (-p\delta\sqrt[p]{\lambda_{1,
 p}}r)
 \leq& (1-\delta^2)^{1-p/2}\int_{M}e^{-p\delta\sqrt[p]{\lambda_{1,p}}r}
 \Big(|\nabla\phi|^2 +2\phi\delta\sqrt[p]{\lambda_{1,p}}|\nabla\phi||\nabla
r|\Big)^{p/2}\\
&+\delta^{2}\lambda_{1,p}\int_Me^{-p\delta\sqrt[p]{\lambda_{1,p}}r}\phi^p.
\end{aligned}$$
Consequently,
\begin{align*}
\lambda_{1,p}\int_M\phi^p\exp (-p\delta\sqrt[p]{\lambda_{1,
 p}}r)
& \leq \dfrac{1}{(1-\delta^2)^{p/2}}\int_{M}e^{-p\delta\sqrt[p]{\lambda_{1,p}}r}
 \Big(|\nabla\phi|^2 +2\phi\delta\sqrt[p]{\lambda_{1,p}}|\nabla\phi||\nabla
r|\Big)^{p/2}\\
& \leq \dfrac{1}{(1-\delta)^{p/2}}\int_{M}e^{-p\delta\sqrt[p]{\lambda_{1,p}}r}
 \Big(|\nabla\phi|^2 +2\phi\delta\sqrt[p]{\lambda_{1,p}}|\nabla\phi||\nabla
r|\Big)^{p/2}.
\end{align*}
Here we used $0<\delta<1$ in the last inequality.

\noindent
\textbf{Case 2: }$1\leq p<2$. Since $0<p/2<1$, we have for
$A,B\geq0$
$$(A+B)^{p/2}\leq A^{p/2}+B^{p/2}.$$
Therefore, by \eqref{case}, we obtain
$$\begin{aligned}
\lambda_{1,p}\int_M\phi^p\exp (-p\delta\sqrt[p]{\lambda_{1,
 p}}r)
 \leq& \int_{M}e^{-p\delta\sqrt[p]{\lambda_{1,p}}r}
 \Big(|\nabla\phi|^2 +2\phi\delta\sqrt[p]{\lambda_{1,p}}|\nabla\phi||\nabla
r|\Big)^{p/2}\\
&+\delta^{2}\lambda_{1,p}\int_Me^{-p\delta\sqrt[p]{\lambda_{1,p}}r}\phi^p.
\end{aligned}$$
Since $0<\delta<1$, this implies,
\begin{align*}
\lambda_{1,p}\int_M\phi^p\exp (-p\delta\sqrt[p]{\lambda_{1,
 p}}r)
 \leq \dfrac{1}{1-\delta}\int_{M}e^{-p\delta\sqrt[p]{\lambda_{1,p}}r}
 \Big(|\nabla\phi|^2 +2\phi\delta\sqrt[p]{\lambda_{1,p}}|\nabla\phi||\nabla
r|\Big)^{p/2}.
\end{align*}
In conclusion, we obtain in both cases that
$$\lambda_{1,p}\int_M\phi^p\exp (-p\delta\sqrt[p]{\lambda_{1,
 p}}r)
 \leq \dfrac{1}{(1-\delta)^{\alpha(p)}}\int_{M}e^{-p\delta\sqrt[p]{\lambda_{1,p}}r}
 \Big(|\nabla\phi|^2 +2\phi\delta\sqrt[p]{\lambda_{1,p}}|\nabla\phi||\nabla
r|\Big)^{p/2}.
$$
Now,
for $R>2,$ we choose
$$\phi=\begin{cases}1 \ \ \mbox{on} \ \ B_0(R-\lambda_{1, f}^{-1/p})\\
\sqrt[p]{\lambda_{1, p}}(R-r) \ \ \mbox{on} \ \ B_o(R)\backslash B_0(R-\lambda_{1, f}^{-1/p})\\
0\ \ \mbox{on} \ \ M\backslash B_0(R)
\end{cases}$$
then $\phi =1$ on $B_0(1)$ since $R-\lambda_{1,p}^{-1/p}\geq
\frac{R}{2}>1$. Plugging $\phi$ in the above inequality, we obtain

\begin{align*}
&\lambda_{1,p} \exp (-p\delta\lambda_{1,p}^{1/p})V_o(1))
\\&\leq\lambda_{1,p} \int_{M}\phi^p\exp (-p\delta\sqrt[p]{\lambda_{1, f}}r)\\&
\leq \dfrac{\lambda_{1,p}}{(1-\delta)^{\alpha(p)}}\int_{B_0(R)\backslash
B_0(R-\lambda_{1,p}^{1/p})}(1+2\phi\delta)^{p/2}e^{
-p\delta\sqrt[p]{\lambda_{1,p}}r}\\&
\leq\dfrac{\lambda_{1,p}}{(1-\delta)^{\alpha(p)}}3^{p/2}e^{-p\delta(\lambda_{1,p}^{1/p}R-1)}V_0(R).
\end{align*}
Here we used $0\leq\phi\leq1$ in the third inequality.
%It is easy to prove the following inequality
%$$(1+\delta)^p\leq 1+p\delta+\delta^2(2^p-p-1)$$
%for $0\leq\delta\leq 1$ and $p\geq1$.
Therefore,
$$e^{p\delta\lambda_{1, p}^{1/p}(R-1)}\leq \dfrac{3^{p/2}e^{p\delta}}{(1-\delta)^{\alpha(p)}}\dfrac{V_o(R)}{V_{o}(1)}
\leq\dfrac{2^pe^p}{(1-\delta)^{\alpha(p)}}\dfrac{V_0(R)}{V_0(1)}.$$ This
implies
$$p\delta\lambda_{1, p}^{1/p}(R-1)\leq p(\ln2+1)+\alpha(p)\ln\dfrac{1}{1-\delta}+ \ln\Big(\dfrac{V_o(R)}{V_o(1)}\Big).$$
The lemma follows by rewriting this inequality.
\end{proof}
\begin{corollary}
Let $M$ be a complete smooth metric measure spaces and $\lambda_{1, p}$ is its weighted $p$-spectrum. Then
$$\lambda_{1, p}\leq \left(\dfrac{1}{p}\liminf\limits_{R\to\infty}\frac{\ln V_{o, f}(R)}{R}\right)^p.$$
\end{corollary}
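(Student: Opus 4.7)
The plan is to derive the inequality in two steps: first establish a weighted analogue of Lemma \ref{4lm2.2} on $(M,g,e^{-f}dv)$, and then pass to the double limit $R\to\infty$ and $\delta\to 1^-$.

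First, I would show that for every $0<\delta<1$, $1\leq p<\infty$, $R>2$, and $o\in M$,
$$\lambda_{1,p}(B_o(R))\leq\frac{1}{\delta^p(R-1)^p}\left(2+\frac{\alpha(p)}{p}\ln\frac{1}{1-\delta}+\frac{1}{p}\ln\frac{V_{o,f}(R)}{V_{o,f}(1)}\right)^p,$$
where $\lambda_{1,p}(B_o(R))$ is the Dirichlet weighted $p$-eigenvalue. The argument of Lemma \ref{4lm2.2} transcribes verbatim to the weighted setting once every $dv$ is replaced by $e^{-f}dv$ and $V_o(R)$ by $V_{o,f}(R)$. The key steps used there—the variational characterization of $\lambda_{1,p}$, the test function $\phi\exp(-\delta\lambda_{1,p}^{1/p}r)$, the pointwise identity $|\nabla r|=1$ a.e., and the convexity splits $(A+B)^{p/2}\leq(1-\delta^2)^{1-p/2}A^{p/2}+\delta^{2-p}B^{p/2}$ for $p\geq 2$ and $(A+B)^{p/2}\leq A^{p/2}+B^{p/2}$ for $1\leq p<2$—are all algebraic and thus indifferent to whether one integrates against $dv$ or $e^{-f}dv$. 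The cutoff $\phi$ equal to $1$ on $B_o(R-\lambda_{1,p}^{-1/p})$ and vanishing outside $B_o(R)$ is again admissible because $R-\lambda_{1,p}^{-1/p}\geq R/2>1$, and the resulting volume bound produces exactly the weighted inequality displayed above.

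Next, I would let $R\to\infty$ and use $\lambda_{1,p}(M)=\lim_{R\to\infty}\lambda_{1,p}(B_o(R))$. Raising both sides to the power $1/p$ and taking $\liminf_{R\to\infty}$ gives
$$\lambda_{1,p}^{1/p}(M)\leq\frac{1}{\delta}\liminf_{R\to\infty}\frac{1}{R-1}\left(2+\frac{\alpha(p)}{p}\ln\frac{1}{1-\delta}+\frac{1}{p}\ln\frac{V_{o,f}(R)}{V_{o,f}(1)}\right).$$
The $R$-independent contributions $2$, $\frac{\alpha(p)}{p}\ln\frac{1}{1-\delta}$, and $-\frac{1}{p}\ln V_{o,f}(1)$ vanish after division by $R-1$, so the right-hand side reduces to
$$\frac{1}{p\,\delta}\liminf_{R\to\infty}\frac{\ln V_{o,f}(R)}{R}.$$
Finally, letting $\delta\to 1^-$ and raising to the $p$-th power yields the stated bound.

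The only substantive step is the weighted analogue of Lemma \ref{4lm2.2}, and even this amounts to a careful bookkeeping exercise: the algebra is identical to the Riemannian case, so there is no genuine obstacle, only the need to verify that no step of that proof secretly relies on the unweighted measure. Once the weighted lemma is in place, the passage to the corollary is a routine double limit.
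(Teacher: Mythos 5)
Your proposal is correct and follows essentially the same route as the paper: apply the eigenvalue upper bound of Lemma \ref{4lm2.2}, let $R\to\infty$, and then let $\delta\to 1^-$. You are in fact more careful than the paper, whose one-line proof cites the unweighted Lemma \ref{4lm2.2} directly even though the corollary concerns the weighted volume $V_{o,f}$; your explicit remark that the lemma's proof transcribes verbatim to the measure $e^{-f}dv$ fills that gap.
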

\begin{proof}We will use Lemma \ref{4lm2.2} to give the proof. First, we let
$R\to\infty$ then let $\delta\to1$, we obtain the conclusion. Hence,
the proof is complete.
\end{proof}
Now we give a proof of Theorem \ref{12main12}.
\begin{proof}[Proof of Theorem \ref{12main12}]
Note that $o\in M$ be a fixed point. Let
$$R_0=\dfrac{p}{(n-1)\delta}\Big(2+\frac{\alpha(p)}{p}\ln\dfrac{1}{1-\delta}+\frac{1}{p}\ln
\frac{V}{V_{o}(1)}\Big)+3.$$
Thanks to Lemma \ref{4lm2.2}, we infer
\begin{equation*}
\lambda_{1, p}(B_o(R_0))\leq \dfrac{(n-1)^p}{p^p}
\end{equation*}
On the other hand, by the variational principle, we have
$$\mu_{1,p}(M)\leq\max \{\lambda_{1, p}(B_o(R_0)), \lambda_{1, p}(M\backslash B_o(R_0))\}.$$
Hence, combining this inequality with assumption regarding to
$\mu_{1,p}(M)$, we obtain
$$\lambda_{1, f}(M\backslash B_o(R_0))\geq \dfrac{(n-1)^p}{p^p}.$$
So Theorem \ref{4main1} implies
$$ N(M)\leq C e^{(n-1)R_0}\frac{V}{V_{o}(1)}.$$
To finish the proof, we first choose $\delta =1-\dfrac{1}{\ln
(V.V_{o}(1)^{-1})}$ then replace the value of $R_0, \delta$ in the
last inequality. So we are done.
\end{proof}
Similarly, using the volume comparison theorems in \cite{Lam, KLZ,
LW05}, we have the following theorems.
\begin{theorem} Let $M^m$ be a complete K\"{a}hler manifold of complex dimension $m$ with finite volume. Assume that
$M$ has holomorphic bisectional curvatures satisfying
$$R_{i\bar{j}\bar{i}\bar{j}}\geq-(1+\delta_{ij})$$ for all
unitary frame $\{e_1, \ldots, e_m\}$. If
$$\mu_{1,p}(M)\geq\left(\dfrac{2m}{p}\right)^p,$$
then  there exists a contant $C(n,p)>0$ depending only
on $n$ and $p$ such that,
$$N(M)\leq C(n,p)\Big(\frac{V}{V_{o}(1)}\Big)^2\ln^{\alpha(p)} \Big(\frac{V}{V_{o}(1)}\Big)$$
where $\alpha(p)=\max\{1, p/2\}$ and $V_o(1)$ denotes the volume of the unit ball centered at any
point $o\in M$
\end{theorem}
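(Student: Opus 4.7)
The plan is to follow the scheme used for Theorem \ref{12main12} line by line, making three substitutions dictated by the Kähler setting and the bisectional curvature hypothesis $R_{i\bar{j}\bar{i}\bar{j}}\geq -(1+\delta_{ij})$. First, I would replace the real Bishop--Gromov comparison (Lemma \ref{vc}) by the Kähler volume comparison of Li--Wang in \cite{LW05}, which under the present curvature assumption yields an upper bound of the form $V(B(y,R))\leq C\,e^{2mR}\,V(B(y,1))$ for every base point $y\in M$, the exponential rate $2m$ being the sharp one imposed by the model complex hyperbolic space. Second, the decay estimate needed on the end $M\setminus B_o(R_0)$ is already given by Theorem \ref{BKp}, whose hypothesis $\lambda_{1,p}(E)>0$ makes no reference to the complex structure and is therefore still applicable. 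Third, Lemma \ref{4lm2.2} carries over without modification since it is a purely Riemannian statement.

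With these ingredients one obtains the Kähler analog of Theorem \ref{4main1}, namely: if $\lambda_{1,p}(M\setminus B_o(R_0))\geq (2m/p)^p$ for some $R_0>0$, then
\[
N(M)\leq C\,e^{2mR_0}\,\frac{V}{V_o(1)}.
\]
The derivation is a transcription of the proof of Theorem \ref{4main1}: pick $N(R)$ points $y_i\in\partial B(o,R+1)$ in distinct unbounded components of $M\setminus B(o,R)$ with pairwise disjoint unit balls, apply the Kähler volume comparison at each $y_i$ to get $V_o(1)\leq C_1 e^{2mR}V_{y_i}(1)$, sum these estimates, and control the sum by the decay $V_o(R+2)-V_o(R)\leq C(1+(R-R_0)^{-1})e^{-2m(R-R_0)}(V_o(R_0+1)-V_o(R_0))$ coming from Theorem \ref{BKp}. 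Letting $R\to\infty$ yields the displayed bound.

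To finish, I would use Lemma \ref{4lm2.2} exactly as in the proof of Theorem \ref{12main12} to choose, for any $0<\delta<1$, the radius
\[
R_0=\frac{p}{2m\,\delta}\left(2+\frac{\alpha(p)}{p}\ln\frac{1}{1-\delta}+\frac{1}{p}\ln\frac{V}{V_o(1)}\right)+3,
\]
which forces $\lambda_{1,p}(B_o(R_0))\leq (2m/p)^p$. Combining this with $\mu_{1,p}(M)\leq\max\{\lambda_{1,p}(B_o(R_0)),\lambda_{1,p}(M\setminus B_o(R_0))\}$ and the hypothesis $\mu_{1,p}(M)\geq (2m/p)^p$ shows $\lambda_{1,p}(M\setminus B_o(R_0))\geq (2m/p)^p$, so the Kähler version of Theorem \ref{4main1} applies at this $R_0$. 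Substituting the explicit $R_0$ into the bound $N(M)\leq C\,e^{2mR_0}V/V_o(1)$ and finally choosing $\delta=1-1/\ln(V/V_o(1))$ produces the asserted estimate $N(M)\leq C(n,p)(V/V_o(1))^2\ln^{\alpha(p)}(V/V_o(1))$ with $n=2m$.

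The main obstacle I anticipate is verifying that the Kähler volume comparison from \cite{LW05} truly delivers the exponent $2m$ under the pointwise bisectional bound $R_{i\bar{j}\bar{i}\bar{j}}\geq -(1+\delta_{ij})$ and that it is uniform in the base point, so that it can be applied simultaneously at each of the boundary points $y_i$; once this is confirmed the rest of the argument is mechanical, as the $p$-Laplacian ingredients (Theorem \ref{BKp} and Lemma \ref{4lm2.2}) are already in the form required.
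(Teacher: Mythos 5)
Your proposal is correct and is exactly the argument the paper intends: the paper gives no written proof of this theorem beyond the remark that it follows ``similarly, using the volume comparison theorems in \cite{Lam, KLZ, LW05},'' and your writeup supplies precisely that transcription --- the K\"ahler volume comparison of \cite{LW05} with growth rate $e^{2mR}$ in place of $e^{(n-1)R}$, the curvature-free decay estimate of Theorem \ref{BKp} and Lemma \ref{4lm2.2} unchanged, and the same choice of $R_0$ and $\delta$ as in the proof of Theorem \ref{12main12}. Your flagged concern about the base-point uniformity of the comparison is the right thing to check in \cite{LW05}, but it is the same issue already implicitly relied upon in Theorem \ref{main2}, so nothing new is needed.
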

\begin{theorem}
Let $(M^n,g)$ be a complete quarternionic K\"{a}hler of real
dimension $4m$ with finite volume given by $V$. Assume that its
scalar curvature of satisfies the bound
$$S_M\geq -16m(m+2)$$
and
 $$\mu_{1,p}(M)\geq\dfrac{(2(2m+1))^p}{p^p}.$$
 Let us denote
$N(M)$ to be the number of ends (cusps) of $M$. Then  there exists a contant $C(n,p)>0$ depending only
on $n$ and $p$ such that,
$$N(M)\leq C(n,p)\Big(\frac{V}{V_{o}(1)}\Big)^2\ln^{\alpha(p)} \Big(\frac{V}{V_{o}(1)}\Big)$$
where $\alpha(p)=\max\{1, p/2\}$ and $V_o(1)$ denotes the volume of the unit ball centered at a
fixed point $o\in M$.
\end{theorem}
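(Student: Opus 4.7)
The proof will parallel that of Theorem \ref{12main12} line for line, with the role of the Bishop--Gromov comparison under $\operatorname{Ric}\ge-(n-1)$ replaced by the quaternionic K\"ahler volume comparison theorem of Kong--Li--Zhou \cite{KLZ}, and with the exponential rate $n-1$ replaced by $2(2m+1)$. Concretely, under the hypothesis $S_M\ge -16m(m+2)$ that comparison supplies a constant $C>0$ with
\begin{equation*}
V\bigl(B(o,R)\bigr)\le C\,e^{2(2m+1)R}\,V\bigl(B(o,1)\bigr)
\end{equation*}
for every $R>0$; this is the exact analogue, for the quaternionic K\"ahler setting, of Lemma \ref{vc}.

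With that comparison in hand, the plan is first to prove the quaternionic K\"ahler analogue of Theorem \ref{4main1}: namely, if $\lambda_{1,p}\bigl(M\setminus B_o(R_0)\bigr)\ge (2(2m+1))^p/p^p$, then
\begin{equation*}
N(M)\le C\,e^{2(2m+1)R_0}\,\frac{V}{V_o(1)}.
\end{equation*}
This is carried out exactly as in the proof of Theorem \ref{main2}/Theorem \ref{4main1}: pick, for each $R$, an $N(R)$-packing $\{y_i\}\subset\partial B(o,R+1)$ with disjoint unit balls (one per unbounded component of $M\setminus B(o,R)$), apply the quaternionic volume comparison to each $B(y_i,R+2)\supset B(o,1)$ to bound $V_o(1)$ above in terms of $V_{y_i}(1)$, sum over $i$, and then use the Buckley--Koskela nonlinear decay estimate (Theorem \ref{BKp}) to control $V_o(R+2)-V_o(R)$ by $e^{-p\lambda_{1,p}^{1/p}R}$; the hypothesis $p\lambda_{1,p}^{1/p}\ge 2(2m+1)$ makes the exponentials cancel and yields the stated bound after letting $R\to\infty$.

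Next I invoke Lemma \ref{4lm2.2}, which requires no curvature assumption and applies verbatim, to convert the hypothesis $\mu_{1,p}(M)\ge (2(2m+1))^p/p^p$ into an inequality for some annular region. Set
\begin{equation*}
R_0=\frac{p}{2(2m+1)\,\delta}\left(2+\frac{\alpha(p)}{p}\ln\frac{1}{1-\delta}+\frac{1}{p}\ln\frac{V}{V_o(1)}\right)+3,
\end{equation*}
so that Lemma \ref{4lm2.2}, with $n-1$ replaced by $2(2m+1)$, gives $\lambda_{1,p}(B_o(R_0))\le (2(2m+1))^p/p^p$. The V\'eron--Matei-type variational inequality
\begin{equation*}
\mu_{1,p}(M)\le\max\bigl\{\lambda_{1,p}(B_o(R_0)),\lambda_{1,p}(M\setminus B_o(R_0))\bigr\}
\end{equation*}
together with the hypothesis on $\mu_{1,p}$ forces $\lambda_{1,p}(M\setminus B_o(R_0))\ge(2(2m+1))^p/p^p$, so the preceding paragraph applies.

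Finally I substitute this choice of $R_0$ into the bound $N(M)\le C e^{2(2m+1)R_0}V/V_o(1)$ and optimize by choosing $\delta=1-1/\ln\bigl(V/V_o(1)\bigr)$, exactly as at the end of the proof of Theorem \ref{12main12}; the factor $\ln^{\alpha(p)}$ appears from the $(1-\delta)^{-\alpha(p)}$ in Lemma \ref{4lm2.2}. There is no genuinely new obstacle: every step is a transcription, with the sole substantive input being that the Kong--Li--Zhou comparison delivers precisely the exponential rate $2(2m+1)$ matching the hypothesis $\mu_{1,p}\ge (2(2m+1))^p/p^p$, so that the Buckley--Koskela decay and the volume comparison balance each other in the packing estimate. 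Verifying that the constants in Lemma \ref{4lm2.2} go through unchanged (since it uses no curvature) is the only bookkeeping item I would check carefully.
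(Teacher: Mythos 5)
Your proposal is correct and is exactly the argument the paper intends: the paper offers no separate proof for this theorem beyond the remark that it follows ``similarly, using the volume comparison theorems in [Lam, KLZ, LW05],'' and your transcription of the proof of Theorem \ref{12main12} with the Kong--Li--Zhou comparison supplying the rate $2(2m+1)$ in place of $n-1$ is precisely that argument, spelled out in more detail than the paper itself provides.
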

\begin{theorem}
Let $(M^n,g)$ be a Let $M$ be a complete noncompact $16$-dimensional
manifold with holonomy group Spin(9) with finite volume given by
$V$. Assume that
 $$\mu_{1,p}(M)\geq\dfrac{22^p}{p^p}.$$
 Let us denote
$N(M)$ to be the number of ends (cusps) of $M$. Then  there exists a contant $C(n,p)>0$ depending only
on $n$ and $p$ such that,
$$N(M)\leq C(n,p)\Big(\frac{V}{V_{o}(1)}\Big)^2\ln^{\alpha(p)} \Big(\frac{V}{V_{o}(1)}\Big)$$
where $\alpha(p)=\max\{1, p/2\}$ and $V_o(1)$ denotes the volume of the unit ball centered at a
fixed point $o\in M$.
\end{theorem}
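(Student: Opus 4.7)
The plan is to mirror the proof of Theorem \ref{12main12}, replacing the role of the Bishop-Gromov comparison $V_{o}(R)\leq C e^{(n-1)R}V_o(1)$ (which used $\Ric_M\geq-(n-1)$) with the Spin(9)-specific volume comparison due to Lam \cite{Lam}. For $16$-dimensional manifolds with holonomy Spin(9), that comparison takes the form $V_o(R)\leq C e^{22R}V_o(1)$, which is why the threshold in the hypothesis is $22^p/p^p$: one has $(2\cdot 11/p)^p$ playing the same structural role that $((n-1)/p)^p$ played in Theorem \ref{12main12}.

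The first step is to prove the Spin(9) analogue of Theorem \ref{4main1}: if $\lambda_{1,p}(M\setminus B_o(R_0))\geq 22^p/p^p$, then
\[
N(M)\leq C\,e^{22R_0}\frac{V}{V_o(1)}.
\]
This follows the exact template of the proof of Theorem \ref{main2}: apply the nonlinear decay estimate of Theorem \ref{BKp} to control $V_o(R+2)-V_o(R)$, pick one point $y_i\in\partial B(o,R+1)$ in each unbounded component of $M\setminus B(o,R)$, and use the Lam volume comparison at each $y_i$ to bound $V_o(1)\leq V_{y_i}(R+2)\leq C_1 e^{22R}V_{y_i}(1)$. Summing over the disjoint unit balls $B(y_i,1)$ and letting $R\to\infty$ yields the claim.

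Once that reduction is in place, the second step is to produce an $R_0$ for which $\lambda_{1,p}(M\setminus B_o(R_0))$ exceeds $22^p/p^p$. By Lemma \ref{4lm2.2} applied with the exponent $22$ in place of $(n-1)$ (formally, we just feed the target $\lambda$ into the inequality) we choose
\[
R_0=\frac{p}{22\,\delta}\Bigl(2+\frac{\alpha(p)}{p}\ln\frac{1}{1-\delta}+\frac{1}{p}\ln\frac{V}{V_o(1)}\Bigr)+3,
\]
so that $\lambda_{1,p}(B_o(R_0))\leq 22^p/p^p$. The variational inequality $\mu_{1,p}(M)\leq\max\{\lambda_{1,p}(B_o(R_0)),\lambda_{1,p}(M\setminus B_o(R_0))\}$ from \cite{Veron}, combined with the hypothesis $\mu_{1,p}(M)\geq 22^p/p^p$, then forces $\lambda_{1,p}(M\setminus B_o(R_0))\geq 22^p/p^p$.

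Finally, insert this $R_0$ into the cusp bound from the first step and optimise by taking $\delta=1-1/\ln(V/V_o(1))$, exactly as in the proof of Theorem \ref{12main12}. The factor $e^{22R_0}$ produces the leading $(V/V_o(1))^2$ while the $(1-\delta)^{-\alpha(p)}$ term contributes the $\ln^{\alpha(p)}(V/V_o(1))$ factor, giving the stated bound. The main technical obstacle is conceptual rather than computational: one must verify that the Spin(9) volume comparison in \cite{Lam} is stated in the form $V_o(R)\leq Ce^{22R}V_o(1)$ with the constant $22$ matching the spectral threshold in the hypothesis (so that the product $e^{-p\lambda_{1,p}^{1/p}R}\cdot e^{22R}$ is exactly critical); everything else is a cosmetic rewrite of the proofs in Section \ref{section4}.
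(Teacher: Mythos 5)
Your proposal is correct and is exactly what the paper intends: the paper itself gives no separate proof of this theorem, merely remarking that it follows ``similarly'' from the argument of Theorem \ref{12main12} by substituting the Spin(9) volume comparison of \cite{Lam} (with growth exponent $22$, consistent with the threshold $22^p/p^p$ and with the $\mu_1(M)\geq 121=22^2/4$ hypothesis in the corresponding linear theorem of Section \ref{section3}) for the Bishop--Gromov-type bound. Your three-step outline --- the Spin(9) analogue of Theorem \ref{4main1}, the choice of $R_0$ via Lemma \ref{4lm2.2}, and the optimisation in $\delta$ --- reproduces that template faithfully.
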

\section*{Acknowlegement}
A part of this paper was written during a stay of the first author
and the third author at the Vietnam Institute for Advanced Study in
Mathematics (VIASM). They would like to express their thanks to the
staff there for the hospitality and support.

\end{document}